\newcommand*{\textlabel}[2]{%
  \edef\@currentlabel{#1}
  \phantomsection
  #1\label{#2}
}
\newtheoremstyle{custom}
  {3pt}
  {3pt}
  {\slshape}
  {}
  {\bfseries}
  {.}
  { }
   {}
\theoremstyle{custom}
\newtheorem{theorem}{Theorem}[section]
\newtheorem{proposition}[theorem]{Proposition}
\newtheorem{proposition/definition}[theorem]{Proposition/Definition}
\newtheorem{lemma}[theorem]{Lemma}
\newtheorem{corollary}[theorem]{Corollary}
\newtheorem{prop}[theorem]{Proposition}
\theoremstyle{definition}
\newtheorem{definition}[theorem]{Definition}
\newtheorem{example}[theorem]{Example}
\theoremstyle{remark}
\newtheorem{remark}[theorem]{Remark}
\newcommand{\stack}[2]{\ensuremath{\genfrac{}{}{0pt}{}{#1}{#2}}} 
\newtheoremstyle{exercise}
  {3pt}
  {6pt}
  {}
  {}
  {\bfseries}
  {:}
  { }
   {}
\theoremstyle{exercise}
\newtheorem{exercise}[theorem]{Exercise}
\newtheoremstyle{exercises}
  {3pt}
  {6pt}
  {}
  {}
  {\bfseries}
  {:}
  {\newline}
   {}
\theoremstyle{exercise}
\newtheorem{exercises}[theorem]{Exercises}
\def\boxit#1{\vbox{\hrule height1pt\hbox{\vrule width1pt\kern3pt
  \vbox{\kern3pt#1\kern3pt}\kern3pt\vrule width1pt}\hrule height1pt}}
\def\trank{\text{rank}}
\def\BC{\mathbb C}\def\BF{\mathbb F}
\def\BP{\mathbb P}
\def\pp#1{\mathbb P^{#1}}
\def\fa{\mathfrak a}
\def\fb{\mathfrak b}
\def\pp#1{{\mathbb P}^{#1}}
\def\tdim{{\rm dim}}
\def\hd{,...,}
\def\ww{\wedge}
\def\upperp{{}^\perp}
\def\inv{{}^{-1}}
\def\cO{{\mathcal O}}
\def\CC{\mathbb C}
\def\11{\mathbf 1}
\def\fsl{{\mathfrak {sl}}}
\def\fm{{\mathfrak m}}
\def\l{\lambda}
\def\a{\alpha}
\def\o{\omega}
\def\b{\beta}
\def\g{\gamma}
\def\s{\sigma}
\def\ot{{\mathord{ \otimes } }}
\def\op{{\mathord{\,\oplus }\,}}
\def\otc{{\mathord{\otimes\cdots\otimes}\;}}
\def\ra{{\mathord{\;\rightarrow\;}}}
\def\dim{{\rm dim}\;}
\def\La#1{\Lambda^{#1}}
\def\frak{\mathfrak}
\def\fsl{\frak s\frak l}
\def\op{\oplus}
\def\BF{\Bbb F}\def\BZ{\Bbb Z}
\def\ep{\epsilon}
\def\op{\oplus}
\def\s{\sigma}
\def\t{\tau}
\def\a{\alpha}
\def\b{\beta}
\def\g{\gamma}
\def\l{\lambda}
\def\FS{\mathfrak  S}
\def\ol{\overline}
\def\BP{\mathbb  P}
\def\BC{\mathbb  C}
\def\pp#1{\mathbb  P^{#1}}
\def\ep{\epsilon}
\def\opc{\op\cdots\op}
\def\hd{, \hdots ,}
\def\inv{{}^{-1}}
\def\La#1{\Lambda^{#1}}
\def\pp#1{\mathbb  P^{#1}}
\def\ur{\underline {\bold R}}
\def\ra{\rightarrow}
\def\tdet{\operatorname{det}}
\def\tdim{\operatorname{dim}}
\def\tlim{\lim}
\def\tmin{\operatorname{min}}
\def\trank{\operatorname{rank}}
\def\upperp{{}^{\perp}}
\def\ww{\wedge}
\def\ctimes{\times \cdots\times}
\def\be{\begin{equation}}
\def\ene{\end{equation}}
\def\aaa{{\bold {a}}}
\newcommand{\Id}{\operatorname{Id}}
\def\tzeros{{\rm Zeros}}
\newcommand{\Spec}{\operatorname{Spec}}
\def\Mn{M_{\langle \nnn \rangle}}\def\Mthree{M_{\langle 3\rangle}}
\def\cK{{\mathcal K}}
\def\trank{{\mathrm {rank}}}
\def\len{{\mathrm{length}}}
\def\aaa{{\bold a}}
\def\VV{\mathbf{V}}
\newcommand{\Mat}{\operatorname{Mat}}
\def\nnn{\bold n}
\def\Mnred{M_{\langle \nnn \rangle}^{red}}
\def\Monred{M_{\langle 1,1, \nnn \rangle}^{red}}
\begin{document}

\title[Geometry of border rank algorithms]{On the geometry of border rank algorithms for matrix multiplication and other tensors with symmetry}
\author{J.M. Landsberg}
\address{
Department of Mathematics\\
Texas A\&M University\\
Mailstop 3368\\
College Station, TX 77843-3368, USA}
\email{jml@math.tamu.edu}
\author{Mateusz Micha{\l}ek}
\address{
Freie Universit\"at\\
 Arnimallee 3\\
 14195 Berlin, Germany\newline
Polish Academy of Sciences\\
         ul. \'Sniadeckich 8\\
         00-956 Warsaw\\
         Poland}
\email{wajcha2@poczta.onet.pl}

 \begin{abstract} We establish basic information about border rank algorithms
 for the matrix multiplication tensor and other tensors with symmetry. We prove that 
 border rank  algorithms for tensors with 
 symmetry (such as matrix multiplication and the determinant polynomial)   come in families that
 include representatives with normal forms. 
 These normal forms will be useful both to develop new efficient
 algorithms and to prove lower complexity bounds.
 We   derive a border rank version of the substitution
 method used in proving lower bounds for tensor rank. We use this border-substitution method and a normal form to improve the
 lower bound on the border rank of matrix multiplication by one, to $2\nnn^2-\nnn+1$. We also point out
 difficulties that will be formidable obstacles to future progress
 on lower complexity bounds for tensors because of the \lq\lq wild\rq\rq\ structure
 of the Hilbert scheme of points.
 \end{abstract}
\thanks{Landsberg    supported by   NSF grant  DMS-1405348. Michalek was supported by Iuventus Plus grant 0301/IP3/2015/73 of the Polish Ministry of Science.}

\keywords{matrix multiplication complexity, border rank, tensor, commuting matrices, Strassen's equations,   MSC 15.80}
\maketitle

\section{Introduction}

Ever since Strassen discovered in 1969 \cite{Strassen493} that
the standard algorithm for multiplying matrices is not optimal, it has
been a central question to determine upper and lower bounds for
the complexity of the matrix multiplication tensor
$\Mn\in \BC^{\nnn^2}\ot \BC^{\nnn^2}\ot \BC^{\nnn^2}$. In the language
of algebraic geometry, this amounts to determining the smallest
value $r$ such that the matrix multiplication tensor
lies on the $r$-th secant variety of the Segre variety
$Seg(\pp{\nnn^2-1}\times \pp{\nnn^2-1}\times \pp{\nnn^2-1})$-- see below
for definitions. This value of $r$ is called the {\it border rank} of $\Mn$
and is denoted $\ur(\Mn)$. 

The main contribution of this article is the observation that one can
simplify this study by restricting one's search to border rank algorithms
of a very special form. This special class of algorithms is of interest in its
own right and we develop basic language to study them.

From the perspective of algebraic geometry, our restriction
 amounts to reducing the
study of the Hilbert scheme of points to the punctual Hilbert scheme
(those schemes supported at a single point).
We expect it to be useful in other situations.

While motivated by the complexity of 
matrix multiplication, our work fits into both the larger study of 
the structure of secant varieties of homogeneous
varieties (e.g., \cite{zak,MR3239293}) and the study
of the geometry of tensors (e.g., \cite{MR2865915}).

\subsection*{Overview} In \S\ref{srdefs} we define  secant varieties and a
variety of border rank algorithms. In \S\ref{norformsect} we prove our main
normal form lemma and show that it applies to the problems
of studying the Waring border rank of the determinant and the
tensor border rank of the matrix multiplication operator.
To better study the normal forms, in \S\ref{areolesect}  we define subvarieties
of secant varieties
associated to certain constructions that have already
appeared in the literature \cite{2015arXiv151200609M,2015arXiv151105707B}.
In \S\ref{braftsect} we prove a border rank version of the substitution
method as  used in \cite{alexeev+forbes+tsimerman:2011:tensor-rank} and apply it to show
$\ur(\Mn)\geq 2\nnn^2-\nnn+1$, an improvement by one over
the previous lower bound of \cite{MR3376667}. 

\subsection*{Notation}
Throughout this paper,
$\VV, A,B,C,U,V,W$ denote complex vector
spaces and  $X\subset \BP \VV$ denotes a projective variety. 
If $v\in \VV$, we let $[v]\in \BP \VV$ denote the corresponding point
in projective space.
For a variety $X$, $X^{(r)}=X^{\times r}/\FS_r$ denotes the 
$r$-tuples of   points of $X$, where $\FS_r$ is the group of permutations on $r$ elements. The vector space of linear maps
$U\ra V$ is denoted $U^*\ot V$,
\section*{Acknowledgements}
We would like to thank Jaroslaw Buczy{\'n}ski and Joachim Jelisiejew for many interesting discussions on secant varieties and local schemes. We thank the Simons Institute for the Theory of Computing, UC
Berkeley, for providing a wonderful environment during the
program
{\it Algorithms and Complexity
in Algebraic Geometry}
during which work on this article began.
Michalek would like to thank PRIME DAAD program. 
 
\section{Secant varieties}\label{srdefs}

Let $X\subset \BP\VV$ be a variety,  let 
$$\s_r^0(X)= \bigcup_{x_1\hd x_r\in X}\langle x_1\hd x_r\rangle  \subset \BP\VV
$$ 
denote the points of $\BP\VV$ on secant $\pp{r-1}$'s of $X$, 
and let $\s_r(X):=\ol{\s_r^0(X)}$ denote its Zariski closure, 
the {\it $r$-th secant variety}  of $X$, where $\langle x_1\hd x_r\rangle$ denotes the projective linear space spanned by
the points $x_1\hd x_r$ (usually it is a $\pp{r-1}$).

In this paper we are primarily concerned with the case
$\BP\VV=\BP(A\ot B\ot C)$ and $X=Seg(\BP A\times \BP B\times \BP C)$ is
the {\it Segre variety} of rank one tensors. The above-mentioned
question about the matrix-multiplication tensor is the case
$A=U^*\ot V$, $B=V^*\ot W$, and $C=W^*\ot U$, and
$M_{\langle U,V,W\rangle}\in A\ot B\ot C$ is the matrix multiplication tensor. 
When $U,V,W=\BC^n$, we denote $M_{\langle U,V,W\rangle}$ by $\Mn$.
The question is: What is the smallest $r$ such that
$[M_{\langle U,V,W\rangle}]\in \s_r(Seg(\BP A\times \BP B\times \BP C))$?
Bini  \cite{MR605920} showed that this $r$, called the {\it border rank}
of $M_{\langle U,V,W\rangle}$ indeed governs its complexity. 
The border rank of a tensor $T$ is denoted $\ur(T)$. 
The smallest $r$ such that a tensor $[T]\in \BP (A\ot B\ot C)$ is
in $\s_r^0(Seg(\BP A\times \BP B\times \BP C))$ is called the {\it rank}
of $T$ and is denoted $\bold R(T)$.

\begin{remark} 
It is expected that the rank of $\Mn$ is greater than its
border rank when $\nnn>2$, and more generally we expect that for \lq\lq most\rq\rq\ tensors $T$ with a large
symmetry group, $\ur(T)<\bold R(T)$. In the case of matrix multiplication we have the following
evidence: $\ur(\Mn)\geq 2\nnn^2-O(\nnn)$ and $\bold R(\Mn)\geq 3\nnn^2-o(\nnn^2)$
\cite{MR3376667,MR3162411}. Moreover, $19\leq \bold R(\Mthree)\leq 23$ while $16\leq \ur(\Mthree)\leq 20$
with inequalities proved respectively in \cite{Bl2,Laderman}, this article,  and \cite{MR3146566}. 
\end{remark}

\begin{definition}
 Let $X\subset \BP \VV$ be a projective variety.
By an {\it $X$-border rank $r$ algorithm} for $z\in \BP \VV$, we   mean a curve $E_t$ in the
Grassmannian $G(r,V)$ such that $z\in \BP E_0$ and for $t>0$, $E_t$ is
spanned by $r$ points of $X$. (This includes the possibility of
$E_t$ being stationary.) In particular,   $z$ admits an $X$-border rank $r$ algorithm if and only if 
$z\in \s_r(X)$.  
When $X=Seg(\BP A\times \BP B\times \BP C)$, we just refer to
{\it border rank algorithms}. We will say such an $E_0$ {\it realizes} $z$ as point
of $\s_r(X)$. 
\end{definition} 

 \begin{remark}
 Instead of taking a curve in the Grassmannian we may and sometimes will  just take a convergent sequence $E_{t_n}$. 
 \end{remark}

Define 
the {\it incidence variety}
$$
S_r^0(X) :=\{ ([v],([x_1]\hd [x_r]))\mid v\in \langle x_1\hd x_r\rangle\}\subset \BP \VV\times X^{(r)} ,
$$
a \lq\lq Nash\rq\rq - type blow up of it
$$
\tilde S_r^0(X) :=\{ ([v],([x_1]\hd [x_r]),\langle x_1\hd x_r\rangle )\mid v\in \langle x_1\hd x_r\rangle, \tdim \langle x_1\hd x_r\rangle=r\}
\subset \BP \VV\times X^{(r)}\times G(r,\VV), 
$$
and the {\it abstract secant variety}
$$
S_r(X):=\ol{\tilde S_r^0(X)}.
$$
We have maps
\begin{align*}
    S_r&(X)   \\
 ^\rho\swarrow \ \  & \  \searrow ^\pi\\
   G(r,\VV) \ \ \ \  &  \  \ \ \s_r(X)
 \end{align*}
where the   map $\pi$ is surjective.

 When discussing rank algorithms, a point $p\in \s_r^0(X)$ is called
{\it identifiable} if there is a unique collection of $r$ points of $X$ such that
$p$ is in their span. When discussing border rank realizations, the $r$-plane
$E_0$ is the more important object, which motivates the following definition:

\begin{definition} We say $[v]\in \s_r(X)$ is {\it Grassmann-border-identifiable} if $\rho\pi\inv([v])$ is a point.
\end{definition}

We will be mostly interested in the case  
 when $X=G/P\subset \BP \VV$ is a homogeneous variety and  $[v]$ has a nontrivial symmetry group $G_v\subset G=G_X$.
In this case $[v]$ is almost never Grassmann-border-identifiable. Indeed, if $z\in \pi\inv([v])$, then the
orbit closure $\overline{G_v\cdot z}$ is also in $\pi\inv([v])$. Hence, to be Grassmann-border-identifiable, $G_v$ would have to act trivially on $\rho(z)$.

\section{The normal form lemma}\label{norformsect}

By \cite[Lemma 2.1]{MR3239293} in any border rank algorithm with $X=G/P$ homogeneous, we may assume there is one stationary point
$x\in X$ with $x\in \BP E_t$ for all $t$.

The following Lemma is central: 

\begin{lemma}[Normal form lemma]\label{normalformlem}  Let $X=G/P\subset \BP \VV$ and let $v\in \VV$ be such that $G_v$ has a single closed orbit
$\cO_{min}$ in $X$. 
Then the $G_v$-orbit closure of any border rank $r$ algorithm of $v$ contains a border rank $r$ algorithm $E=\tlim_{t\ra 0}\langle
x_1(t)\hd x_r(t)\rangle$ where there is
a stationary point $x_1(t)\equiv x_1$  lying in $\cO_{min}$.

If moreover  every orbit of $G_{v,x_1}$ contains $x_1$ in its closure, we may
further assume that   all other $x_j(t)$ limit to $x_1$.
\end{lemma}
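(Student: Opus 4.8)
The plan is to produce the desired border rank algorithm by acting on a given one with a well-chosen one-parameter subgroup (or, more generally, a well-chosen curve) in $G_v$, and then taking a limit in a compactification of the set of algorithms. We start from an arbitrary border rank $r$ algorithm $E_t = \langle x_1(t) \hd x_r(t)\rangle$ of $v$; by the cited \cite[Lemma 2.1]{MR3239293} we may assume one point is stationary, $x_1(t)\equiv x_1\in X$. First I would show $x_1$ can be taken in $\cO_{min}$. Since $\cO_{min}$ is the unique closed $G_v$-orbit in $X$, every $G_v$-orbit closure in $X$ contains $\cO_{min}$; in particular $\overline{G_v\cdot x_1}\supset \cO_{min}$. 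Pick a one-parameter subgroup $\lambda(s)$ of $G_v$ with $\lim_{s\to 0}\lambda(s)\cdot x_1 = x_1'\in\cO_{min}$ (possible by the Hilbert--Mumford criterion applied to $\overline{G_v\cdot x_1}$). Because $\lambda(s)\subset G_v$ fixes $[v]$, each $\lambda(s)\cdot E_t$ is again a border rank $r$ algorithm of $v$ with stationary point $\lambda(s)\cdot x_1$. Now take an iterated limit: for each fixed $s$ let $t\to 0$ to get an $r$-plane $E^{(s)}\in G(r,\VV)$ containing $[v]$ and lying in $S_r(X)$ (using properness of $S_r(X)\to G(r,\VV)$ to stay inside the abstract secant variety and to keep track of the stationary point $\lambda(s)\cdot x_1$ in the first slot), then let $s\to 0$. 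Using compactness of the Grassmannian and of $S_r(X)$, a subsequential limit $E$ exists, still contains $[v]$, is still a border rank $r$ algorithm, and now has its stationary point equal to $x_1'\in\cO_{min}$. This limiting point lies in $\overline{G_v\cdot E_t}$ by construction.

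For the second statement I would repeat the same idea one slot down. After the first part we have $E_t=\langle x_1, x_2(t)\hd x_r(t)\rangle$ with $x_1\in\cO_{min}$; work now with $H:=G_{v,x_1}$, the stabilizer in $G_v$ of both $[v]$ and $x_1$. The hypothesis is precisely that $x_1$ lies in the closure of every $H$-orbit in $X$. So for each $j\ge 2$ and each fixed small $t$, choose (via Hilbert--Mumford for $H$ acting on $\overline{H\cdot x_j(t)}$) a one-parameter subgroup $\mu(s)\subset H$ with $\lim_{s\to0}\mu(s)\cdot x_j(t)=x_1$. Since $\mu(s)$ fixes both $[v]$ and $x_1$, the family $\mu(s)\cdot E_t$ is again a border rank $r$ algorithm of $v$ with the same stationary point $x_1$. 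Taking the appropriate iterated limit — first in $s$ to drive the chosen $x_j(t)$ toward $x_1$, handling all $j=2\hd r$ one at a time (or simultaneously with a single $\mu(s)$ after choosing it to work against all of them at once, which is possible by passing to a common refinement), then $t\to 0$ — and again invoking properness of $S_r(X)\to G(r,\VV)$ to land in $S_r(X)$, we obtain a border rank $r$ algorithm $E=\lim_{t\to 0}\langle x_1, x_2(t)\hd x_r(t)\rangle$ in which every $x_j(t)\to x_1$. This $E$ again lies in the $G_v$-orbit closure of the original algorithm.

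The main obstacle I expect is the limit-interchange bookkeeping: one is taking limits in $t$ and in the group parameter $s$, and must ensure the resulting $r$-plane (a) still contains $[v]$, (b) is still an honest border rank $r$ algorithm — i.e. the limiting $r$-plane is a genuine point of $S_r(X)$, not something that degenerates in dimension or escapes the secant variety — and (c) retains the structural features (stationary point in $\cO_{min}$, then all points limiting to $x_1$). The clean way to handle (b) is to do everything on the abstract secant variety $S_r(X)$, which is proper, and then push forward via $\rho$ and $\pi$; this is exactly why the authors introduced $S_r(X)$ and the maps $\rho,\pi$. A subtlety worth flagging is that after the first step $x_1$ is a specific point of $\cO_{min}$, so in the second step $H=G_{v,x_1}$ is a fixed group and the hypothesis "$x_1$ lies in every $H$-orbit closure" must be checked for that group; one should also note the two operations are compatible, i.e. the second limiting process does not move $x_1$ out of $\cO_{min}$, which is automatic since $\mu(s)\subset H$ fixes $x_1$. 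Finally, the claim that the output lies in $\overline{G_v\cdot E_{t}}$ (the $G_v$-orbit closure of the given algorithm, viewed in a suitable parameter space of algorithms) follows because every plane we wrote down is of the form $g\cdot E_t$ with $g\in G_v$, and closure is preserved under limits.
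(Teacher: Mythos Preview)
Your strategy — act by a curve in $G_v$ (resp.\ $G_{v,x_1}$) to drag points toward $\cO_{min}$ (resp.\ $x_1$), then pass to a limit — is exactly the paper's. The gap is in how you manage the two limits in the second part.

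You choose $\mu(s)\subset G_{v,x_1}$ to drive $x_j(t)$ to $x_1$ for a \emph{fixed} $t$, so $\mu$ depends on $t$ (and on $j$), and then you send $s\to 0$ first. But once $s\to 0$ two of the $r$ points have coalesced at $x_1$ and you no longer have $r$ points of $X$ spanning an $r$-plane; you cannot cleanly iterate in $j$, and when you then let $t\to 0$ you are moving a curve of limit $r$-planes, not a curve of honest rank-$r$ decompositions — precisely the data needed to read off the limiting behavior of the individual $x_j$. The alternative you offer (a single $\mu(s)$ sending all $x_j(t)$ to $x_1$ ``by passing to a common refinement'') is not justified: nothing guarantees one one-parameter subgroup of $G_{v,x_1}$ does this. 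The paper avoids both problems. It chooses $g_\epsilon\in G_{v,x_1}$ to move the \emph{limit point} $x_{q+1}(0)$ toward $x_1$, so $g_\epsilon$ is independent of $t$; and it never sends $\epsilon\to 0$ on its own but picks a \emph{diagonal} sequence $(\epsilon_n,t_n)$, with $t_n$ small enough (depending on $\epsilon_n$) that $g_{\epsilon_n}x_i(t_n)$ is close to $x_1$ for $i\le q$, that $g_{\epsilon_n}x_{q+1}(t_n)$ is close to $g_{\epsilon_n}x_{q+1}(0)$ (hence to $x_1$), and that the span meets a $1/n$-ball about $v$. Since $g_{\epsilon_n}$ fixes $x_1$, the first point $g_{\epsilon_n}x_1(t_n)=x_1$ stays stationary, and $\tilde x_i(n):=g_{\epsilon_n}x_i(t_n)$ is an honest $r$-point configuration at every $n$ with the first $q+1$ points now limiting to $x_1$; then one inducts on $q$. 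Your $S_r(X)$-properness argument does keep the limit plane in the right variety and keeps $[v]$ inside it, but it does not by itself produce a realizing curve with a \emph{stationary} first point — the paper gets that by invoking \cite[Lemma 2.1]{MR3239293} after all points already limit to the same $x_1$.
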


\begin{proof}  
The proof of the first statement follows from the same methods as the proof of the second, hence we focus on the latter. 
We prove we can have all points limiting to the same point $x_1(0)$. By \cite[Lemma 2.1]{MR3239293}  this is enough to conclude. 

We work by induction. Say we have shown that $x_1(t)\hd x_q(t)$ all limit to the same point $x_1\in\cO_{min}$.
We will show that our curve can be modified so that the  same holds for $x_1(t)\hd x_{q+1}(t)$.
Take a curve $g_{\ep}\in G_{v,x_1}$ such that $\tlim_{\ep\ra 0}g_\ep x_{q+1}(0)=x_1$. For each fixed $\ep$, acting 
on the $x_j(t)$ by $g_\ep$,  we obtain a border rank algorithm for which $g_\ep x_i(t)\rightarrow x_1(0)$ for
$i\leq q$ and $g_\ep x_{q+1}(t)\rightarrow g_{\ep}x_{q+1}(0)$. Fix a sequence $\ep_n\rightarrow 0$. 
Claim: we may choose a sequence $t_n\ra 0$ such that
\begin{itemize}
\item $\lim_{n\rightarrow\infty}g_{\ep_n}x_{q+1}(t_n)= x_1(0)$, and 
\item $\lim_{n\rightarrow\infty}<g_{\ep_n}x_1(t_n),\dots,g_{\ep_n}x_r(t_n)>$ contains $v$.
\end{itemize}
The first   point holds as $\tlim_{\ep\ra 0}g_\ep x_{q+1}(0)=x_1$. The second  follows as for each fixed $\ep_n$, taking $t_n$ sufficiently small we may assure that a ball of radius $1/n$ centered at $v$ intersects $<g_{\ep_n}x_1(t_n),\dots,g_{\ep_n}x_r(t_n)>$.
Considering the sequence  $\tilde x_i(t_n):=g_{\ep_n}x_i(t_n)$ we obtain the desired border rank algorithm. 
\end{proof}

Our main interest consists of  the following two examples:

\subsection{The determinant polynomial}Let  $v_n: \BP W\ra \BP (S^nW)$ denote  the Veronese re-embedding of $\BP W$.
When  $W=E\ot F=\BC^n\ot \BC^n$, the space $\VV=S^nW$ is the  home of the determinant polynomial. Write 
$X=v_n(\BP W)\subset \BP S^nW$   and $v=\tdet_n$ for  the determinant. 
Here $G_X=GL_{n^2}$ and $G_v\simeq (SL(E)\times SL(F))\ltimes \BZ_2$. The group 
$G_v$ has a unique closed orbit $\cO_{min}=v_n(Seg(\BP E\times \BP F))$ in $X$.  
Moreover,
for any $z\in v_n(Seg(\BP E\times \BP F))$, $G_{\tdet_n,z}$, the group preserving
both $\tdet_n$ and $z$,  is isomorphic to $P_E\times P_F$, where $P_E,P_F$ are the parabolic 
subgroups of matrices with zero in the first column except the $(1,1)$-slot, and  
  $z$ is  in the $G_{\tdet_n,z}$-orbit closure of any $q\in v_n(\BP W)$.

\subsection{The matrix multiplication tensor} 
Set $A=U^*\ot V$, $B=V^*\ot W$, $C=W^*\ot U$. The space $\VV=A\ot B\ot C$ is the  home of the matrix multiplication tensor, 
$X=Seg(\BP A\times \BP B\times \BP C)=Seg(\BP (U^*\ot V)\times \BP (V^*\ot W)\times \BP (W^*\ot U))\subset \BP (A\ot B\ot C)$, 
and $v=M_{\langle U,V,W\rangle}=\Id_U\ot \Id_V\ot \Id_W\in (U^*\ot V)\ot (V^*\ot W)\ot (W^*\ot U)$ is the matrix multiplication
tensor. ($\Id_U\in U^*\ot U$ denotes the identity map and in the
expression for $M_{\langle U,V,W\rangle}$ we re-order factors.) Here $G_X=GL(A)\times GL(B)\times GL(C)$ 
and $G_{M_{\langle U,V,W\rangle}}=GL(U)\times GL(V)\times GL(W)$, and both are slightly larger by
a finite group if some of the  dimensions coincide.

\begin{proposition}\label{ckprop}
Let 
$$\cK:=\{
[\mu\ot v\ot \nu\ot w\ot \o\ot u]\in 
 Seg(\BP U^*\times \BP V\times \BP V^*\times \BP W\times \BP W^*\times \BP U)
 \mid \mu(u)=\o(w)=\nu(v)=0
 \}
 $$
Then $\cK$ is the unique closed $G_{M_{\langle U,V,W\rangle}}$-orbit in $Seg(\BP A\times \BP B\times \BP C)$.

Moreover, if $k\in \cK$, then $G_{M_{\langle U,V,W\rangle},k}$, the group preserving both $M_{\langle U,V,W\rangle}$
and $k$, is such that for every $p\in Seg(\BP A\times \BP B\times \BP C)$, $k\in \ol{G_{M_{\langle U,V,W\rangle},k}\cdot p}$.
\end{proposition}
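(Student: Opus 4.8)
The plan is to derive both statements from the Borel fixed-point theorem, after identifying the unique Borel-fixed point of the Segre variety. Abbreviate $G:=G_{M_{\langle U,V,W\rangle}}=GL(U)\times GL(V)\times GL(W)$, fix bases $u_1,\dots,u_n$, $v_1,\dots,v_n$, $w_1,\dots,w_n$ of $U,V,W$ with dual bases $u^i,v^j,w^k$, and let $B_G=B_U\times B_V\times B_W$ be the corresponding upper-triangular Borel subgroup of $G$. Set
$$p_0:=[u^n\ot v_1\ot v^n\ot w_1\ot w^n\ot u_1]\in Seg(\BP U^*\times\BP V\times\BP V^*\times\BP W\times\BP W^*\times\BP U).$$
Since $u^n(u_1)=v^n(v_1)=w^n(w_1)=0$ (assuming $\dim U,\dim V,\dim W\geq 2$; the case of $1\times 1$ matrices being trivial, and unequal dimensions being identical) we have $p_0\in\cK$. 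First I would note that $G$ acts transitively on $\cK$: after reordering factors, $\cK$ is literally the product $\{([\mu],[u]):\mu(u)=0\}\times\{([v],[\nu]):\nu(v)=0\}\times\{([w],[\omega]):\omega(w)=0\}$ inside $(\BP U^*\times\BP U)\times(\BP V\times\BP V^*)\times(\BP W\times\BP W^*)$, on which $G$ acts factorwise, so transitivity reduces to the obvious transitivity of $GL_n$ on point--hyperplane incidence pairs $\{([\phi],[x])\in\BP(\BC^n)^{*}\times\BP\BC^n:\phi(x)=0\}$. As $\cK$ is cut out of $Seg(\BP A\times\BP B\times\BP C)$ by the (closed) rank-one and incidence conditions, it is closed; hence $\cK=G\cdot p_0$ is a single closed orbit.

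The crux is the claim that $X:=Seg(\BP A\times\BP B\times\BP C)=\BP A\times\BP B\times\BP C$ has exactly one $B_G$-fixed point, namely $p_0$. Indeed, since $B_U,B_V,B_W$ act independently, a point $([a],[b],[c])$ is $B_G$-fixed if and only if $[a]$ is $(B_U\times B_V)$-fixed in $\BP A=\BP(U^*\ot V)$, $[b]$ is $(B_V\times B_W)$-fixed in $\BP B$, and $[c]$ is $(B_W\times B_U)$-fixed in $\BP C$. Now $U^*\ot V$ is an irreducible $GL(U)\times GL(V)$-module (the tensor product of an irreducible $GL(U)$-module with an irreducible $GL(V)$-module), hence has a unique Borel-stable line, its highest weight line $\langle u^n\ot v_1\rangle$; likewise the unique fixed points of $\BP B$ and $\BP C$ are $[v^n\ot w_1]$ and $[w^n\ot u_1]$. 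Combining the three factors gives $p_0$.

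With this in hand both assertions are formal. For the first: any closed $G$-orbit in $X$ is a nonempty complete variety acted on by the connected solvable group $B_G$, hence contains a $B_G$-fixed point by Borel's theorem; that point must be $p_0$, so the orbit equals $G\cdot p_0=\cK$. Thus $\cK$ is the unique closed orbit. For the moreover part, transitivity of $G$ on $\cK$ together with $G_{M_{\langle U,V,W\rangle},g\cdot p_0}=g\,G_{M_{\langle U,V,W\rangle},p_0}\,g^{-1}$ reduce us to $k=p_0$. A direct check shows $G_{M_{\langle U,V,W\rangle},p_0}=P_U\times P_V\times P_W$, where $P_U\subset GL(U)$ is the parabolic stabilizing the flag $\langle u_1\rangle\subset\langle u_1,\dots,u_{n-1}\rangle$ (equivalently, stabilizing both $[u_1]\in\BP U$ and $[u^n]\in\BP U^*$) and similarly for $P_V,P_W$: an element of $G$ fixes the rank-one tensor line $p_0$ iff it fixes each of the three rank-one factor lines $[u^n\ot v_1]\in\BP A$, $[v^n\ot w_1]\in\BP B$, $[w^n\ot u_1]\in\BP C$, which is exactly these flag conditions. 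In particular $G_{M_{\langle U,V,W\rangle},p_0}$ is parabolic in $G$ and contains the Borel $B_G$. Hence for any $p\in X$ the orbit closure $\overline{G_{M_{\langle U,V,W\rangle},p_0}\cdot p}$ is a nonempty complete variety acted on by $B_G$, so it contains a $B_G$-fixed point of $X$, which can only be $p_0$; therefore $p_0\in\overline{G_{M_{\langle U,V,W\rangle},p_0}\cdot p}$, as required.

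The only genuine work is the identification of the $B_G$-fixed locus of $X$ and the verification that the resulting highest-weight point lies on $\cK$; both are elementary but demand care with the bookkeeping of which of $GL(U),GL(V),GL(W)$ acts on which dual slot of $A,B,C$. A minor point: when some of $\dim U,\dim V,\dim W$ coincide the symmetry group $G_{M_{\langle U,V,W\rangle}}$, and hence $G_{M_{\langle U,V,W\rangle},p_0}$, is enlarged by a finite group; this is harmless, since the argument only uses that the stabilizer contains $B_G$ (and one may pass to identity components without affecting orbit closures up to a finite union). Finally, note that the moreover statement is precisely the hypothesis ``every orbit of $G_{v,x_1}$ contains $x_1$ in its closure'' needed to apply Lemma~\ref{normalformlem} to matrix multiplication.
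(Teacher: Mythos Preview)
Your argument is correct. The two proofs reach the same conclusion by different routes.

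The paper argues directly and explicitly: it observes that fixing $k$ amounts to fixing a line-in-hyperplane flag in each of $U,V,W$, then shows by hand that any point of the Segre can be degenerated to $k$ under the stabilizer. First, if a factor such as $a\in A$ is not of rank one, a torus compatible with the flag already moves it to a non-closed orbit, so one reduces to points of the sub-Segre $Seg(\BP U^*\times\BP V\times\BP V^*\times\BP W\times\BP W^*\times\BP U)$; then each coordinate is pushed to the corresponding coordinate of $k$ by flag-preserving elements of $GL(V)$, $GL(U)$, $GL(W)$. The first assertion is deduced from the second.

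Your route packages all of this into the Borel fixed-point theorem: you identify the unique $B_G$-fixed point $p_0$ of $X$ via highest-weight lines, check $p_0\in\cK$, and then both the uniqueness of the closed orbit and the ``every orbit closure contains $k$'' statement drop out immediately once you observe that $G_{M_{\langle U,V,W\rangle},p_0}\supset B_G$. This is cleaner and makes transparent why the result holds (it is the standard mechanism for closed orbits of reductive groups on projective varieties), at the cost of invoking a theorem the paper's readers may or may not have at hand. The paper's explicit degeneration, by contrast, exhibits concrete one-parameter families that push a given point toward $k$, which is closer in spirit to the constructive use the paper later makes of Lemma~\ref{normalformlem}. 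Both are complete; yours is shorter.
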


Note that $Seg(\BP U\times \BP U^*)_0:=\{ [u\ot \a]\mid \a(u)=0\}\subset \BP \fsl(U)$ is the closed orbit in the adjoint
representation and $\cK$ is isomorphic to $Seg( Seg(\BP U\times \BP U^*)_0\times  Seg(\BP V\times \BP V^*)_0\times  Seg(\BP W\times \BP W^*)_0 )$.

\begin{proof}
It is enough to prove the last statement. We will prove that $k$ is the unique closed orbit under $G_{M_{\langle U,V,W\rangle},k}$.
This is enough to conclude as the closure of any orbit must contain a closed orbit. Notice that fixing $k=[( \mu\ot  v)\ot ( \nu\ot  w)\ot ( \o \ot  u)]$ is equivalent to fixing a partial flag in each $U,V$ and $W$ consisting of a line and a hyperplane containing it.

Let $[a\ot b\ot c]\in   Seg(\BP A\times \BP B\times \BP C)$. If $[a]\not\in Seg(\BP U^*\times \BP V)$ then the orbit is not closed, even under the torus action on $V$ that is compatible with the flag. 
So without loss of generality, we may assume $[a\ot b\ot c]\in Seg(\BP U^*\times \BP V\times \BP V^*\times \BP W\times \BP W^*\times \BP U)$.
Write $a\ot b\ot c=(\mu'\ot v')\ot (\nu'\ot w')\ot (\o' \ot u')$.
If, for example $ v'\neq v$, we may act with an element of $GL(V)$ that preserves the partial flag and sends $v'$ to $ v+\epsilon v'$. Hence 
$v$ is  in the closure of the orbit of $v'$. As $G_{M_{\langle U,V,W\rangle},k}$ preserves $ v$ we may continue, reaching   $k$ in the closure.
\end{proof}

\begin{remark} Proposition \ref{ckprop} combined with the normal form Lemma
allows the argument of \cite{MR2188132} to be simplified tremendously, as it vastly reduces the
number of cases. In particular, it eliminates the need for the erratum.
\end{remark}

\section{Local versions of secant varieties}\label{areolesect}
In this section we introduce several higher order generalizations
of the tangent star at a point of a variety and the tangential variety.
The generalizations are  subvarieties of the $r$-th  secant variety  of a projective variety.
We restrict our discussion to projective varieties $X\subset \BP \VV,$ 
however the discussion can be extended to arbitrary embedded schemes.
Our initial motivation was to provide language to discuss the normal
form of the main lemma, but   the discussion is  useful
in a wider context;   special cases have already been
used in  \cite{2015arXiv151105707B, 2015arXiv151200609M}.

We exhibit local properties of the $r$-th secant variety 
using the language of  smoothable schemes of length $r$ supported at one point (i.e.~local). Their moduli space is in the principal component of the Hilbert scheme of subschemes of length $r$ of $X$. This component is an algebraic variety,   a compactification of $r$-tuples of distinct points of $X$, i.e.~$(X^{( r)}\setminus D)$, where $D$ is the big diagonal. It parametrizes smoothable schemes, i.e.~schemes that arise as degenerations of of $r$ distinct points with reduced structure.  More formally, an ideal $I$ defines a smoothable scheme if there exists a flat family $I_t$ with the fiber $I$ for $t= 0$, where
for $t\neq 0$, $I_t$ is the ideal of $r$ distinct points.

\subsection{Areoles and buds}
Recall that a scheme $S$ supported   at $0\in \BC^n$ corresponds to an ideal
$I\subset \BC[x_1\hd x_n]$ whose only zero is $(0)$. Define the span of $S$ to be
$\langle S\rangle := \tzeros (I_1)$ where $I_1\subset I$ is the homogeneous degree
one component. This definition depends on the embedding of $S$.

We start by recalling the definition of the areole from \cite[Section 5.1]{2015arXiv151105707B}.
\begin{definition}[Areole]
Let $p\in X$.
   The \textbf{$r$-th open areole} at $p$ is
   \begin{align*}
     \mathfrak{a}^\circ_r(X,p)&:= { \bigcup \{ \langle R \rangle \mid   
             R\text{ is smoothable in }X \text{, supported at } p \text{ and } \len(R) \leq r \} },
   \end{align*}
     the \textbf{$r$-th areole} at $p$ is the closure:
   \begin{align*}
     \mathfrak{a}_r(X,p)&:= \overline{ \mathfrak{a}^\circ_r(X,p)},
   \end{align*}
 and the \textbf{$k$-th areole variety} of $X$ is
$$\fa_r(X):=\bigcup_{p\in X}\mathfrak{a}_r(X,p).
$$
\end{definition}
The areole can be regarded as a   generalization of a tangent space. 
Indeed, consider $r=2$ and a smooth point $p\in X$. Up to isomorphism there is only one local scheme of length two: $\Spec \CC[x]/(x^2)$, and  the embedded tangent space 
at $p$ may be identified with linear spans of such schemes, supported at $p$.
In particular, if $X$ is smooth then  $\fa_2(X)=\t(X)$, the tangential variety of $X$. 


Another, differential geometric,  definition of a tangent line  is as a limit of secant lines. 
This motivates the following.
\begin{definition}[Greater Areole]
The \textbf{$r$-th open greater areole} at $p$ is
   \begin{align*}
     \tilde{\mathfrak{a}}^\circ_r(X,p)&:= \bigcup_{
     \stack{x_j(t)\subset X}{x_j(t)\ra p} }\tlim_{t\ra 0}\langle x_1(t)\hd x_r(t)\rangle,
        \end{align*}
   the \textbf{$r$-th greater areole} at $p$ is the closure:
\begin{align*}
     \tilde{\mathfrak{a}}_r(X,p)&:=
     \ol{ \tilde{\mathfrak{a}}^\circ_r(X,p)} ,
   \end{align*}
and the \textbf{$r$-th greater areole variety} of $X$ is
$$
\tilde{\mathfrak{a}}_r(X):=\bigcup_{p\in X} \tilde{\mathfrak{a}}_r(X,p).
$$
\end{definition}
 \begin{remark}
 The difference between the areole and the greater areole is related to the difference of border rank and smoothable rank - 
 the latter was introduced in \cite{MR2842085}. Indeed, points in the $r$-th areole belong to a linear span of a scheme hence are of smoothable rank at most $r$. 
 \end{remark}

 The normal form in Lemma \ref{normalformlem} can be restated in the following way. 
If a point $v$ satisfies all assumptions of the Lemma, then it belongs to the $r$-th secant variety if and only if it belongs to the $r$-th greater areole $\tilde{\mathfrak{a}}_r(X,p)$ for a point $p\in\cO_{min}$.

\begin{lemma}\label{lem:contain}
${\mathfrak{a}}^\circ_r(X,p)\subset \tilde{\mathfrak{a}}^\circ_r(X,p)$
and
${\mathfrak{a}}_r(X,p)\subset \tilde{\mathfrak{a}}_r(X,p).$
\end{lemma}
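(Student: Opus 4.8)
The plan is to show that a point in the open areole $\mathfrak{a}^\circ_r(X,p)$ arises as a limit of spans of $r$-tuples of points of $X$ converging to $p$, i.e.\ that it lies in $\tilde{\mathfrak{a}}^\circ_r(X,p)$; the second inclusion then follows by taking closures. So let $q\in\langle R\rangle$ where $R$ is a smoothable scheme supported at $p$ with $\len(R)=s\leq r$. By definition of smoothability there is a flat family $R_t$ with $R_0=R$ and, for $t\neq 0$, $R_t$ consisting of $s$ distinct (reduced) points $x_1(t),\dots,x_s(t)$ of $X$. Because the family is flat, $\dim\langle R_t\rangle$ is upper semicontinuous in the sense needed, so $\langle R\rangle\subset\tlim_{t\to 0}\langle x_1(t),\dots,x_s(t)\rangle$ (the limit taken in the appropriate Grassmannian of the principal component, exactly as in the construction of $S_r(X)$); and since $R$ is supported at $p$, each $x_j(t)\to p$ as $t\to 0$. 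If $s<r$ we pad with $r-s$ extra points all chosen equal to, or converging to, $p$ — this only enlarges the limiting span — so that $q$ lies in a limit of spans of honest $r$-tuples limiting to $p$, hence $q\in\tilde{\mathfrak{a}}^\circ_r(X,p)$.

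The key steps, in order, are: (1) recall that a point of the open areole lies in $\langle R\rangle$ for some smoothable $R$ supported at $p$ of length $\leq r$; (2) invoke smoothability to produce the flattening family $R_t$ degenerating $s\leq r$ distinct points $x_j(t)\in X$ to $R$; (3) argue that the linear span behaves semicontinuously along this flat family, so that $\langle R\rangle$ is contained in the limiting span $\tlim_{t\to 0}\langle x_1(t),\dots,x_s(t)\rangle$; (4) observe $x_j(t)\to p$ since $R$ is supported at $p$; (5) pad up to $r$ points limiting to $p$ to land in $\tilde{\mathfrak{a}}^\circ_r(X,p)$; (6) take closures to obtain $\mathfrak{a}_r(X,p)\subset\tilde{\mathfrak{a}}_r(X,p)$.

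The main obstacle is step (3): controlling the linear span of the fibers of a flat family of finite schemes as $t\to 0$. One must be careful that $\dim\langle R_t\rangle$ can drop in the limit only in the \emph{wrong} direction for us — in fact flatness guarantees $\len(R_t)$ is constant, and the span of a length-$s$ scheme has dimension at most $s$, so $\langle R_0\rangle=\langle R\rangle$ has dimension at most $s$ as well; the content is that every linear form vanishing on all $x_j(t)$ for $t\neq 0$, in the limit, vanishes on $\langle R\rangle$. This is cleanest to see by passing to the relative $\langle\,\cdot\,\rangle$ construction (the relative linear span of the family, a subscheme of $\BP\VV\times\text{(base)}$) and using that its fiber over $0$ contains $\langle R_0\rangle$ by flatness/specialization of the ideal of linear forms; this is exactly the mechanism underlying the definition of the abstract secant variety $S_r(X)$ and of smoothable rank as discussed in the remark preceding the lemma, so it can be cited rather than reproven in detail. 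Once this is granted the rest is bookkeeping.
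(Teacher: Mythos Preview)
Your proof is correct and follows essentially the same route as the paper's: take a point in the span of a smoothable length-$s$ scheme $R$ supported at $p$, smooth it to $s$ distinct points $x_j(t)$, use that the span of the limit scheme is contained in the limit of the spans, and note the $x_j(t)$ converge to $p$. The paper is terser about the padding and about step~(3), simply asserting that ``the linear span of the limit is contained in the limit of linear spans'' (and remarking afterward that this is an instance of upper semicontinuity of Betti numbers), whereas you spell these points out more carefully; but the argument is the same.
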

\begin{proof}
It is enough to show the first inclusion. Let $v\in {\mathfrak{a}}^\circ_r(X,p)$. Then, by definition, there exists a scheme $S$, smoothable in $X$, supported at $p$ such that $v\in \langle S\rangle$. As $S$ is smoothable in $X$ we may find a family of points $x_i(t)$ for $i=1,\dots, k$, such that $S$ is their limit as $t\rightarrow 0$. We may also assume that $\langle x_1(t)\hd x_r(t)\rangle$ is of constant dimension. Let $E:=\tlim_{t\ra 0}\langle x_1(t)\hd x_r(t)\rangle$ that is also of dimension $r-1$. 
The linear span $\langle S\rangle$ of the limit is contained in the limit $E$ of linear spans.  By definition $E$ is contained in $\tilde{\mathfrak{a}}^\circ_r(X,p)$.
\end{proof}

\begin{remark}
The relation between the linear span of the limit and the limit
of linear spans can be viewed as a special case of upper semi-continuity of Betti numbers under deformation \cite[III.12.8]{MR0463157}, i.e.~the number of equations of fibers in a flat family of given degree can only jump up in the limit.
\end{remark}

The cases when the areole equals the greater areole are of particular interest. 

The following proposition is well-known, however usually stated in different language. It is essentially due to Grothendieck \cite{FGAIV} and played an important role in the construction of the Hilbert scheme. Recently, it was crucial in \cite{2015arXiv151105707B}. The proof of the following proposition follows from \cite[Theorem 5.7]{2015arXiv151105707B}.
\begin{proposition}\label{prop:eq}
Suppose that $p$ is a point of a variety $X$ embedded by at least an $(r-1)$-st Veronese embedding. Then:
$${\mathfrak{a}}_r(X,p)= \tilde{\mathfrak{a}}_r(X,p).$$
\end{proposition}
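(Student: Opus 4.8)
The plan is to reduce to a purely local statement about the Hilbert scheme. By Lemma \ref{lem:contain} we already have $\mathfrak{a}_r(X,p)\subset\tilde{\mathfrak{a}}_r(X,p)$, so the content is the reverse inclusion $\tilde{\mathfrak{a}}_r(X,p)\subset\mathfrak{a}_r(X,p)$. First I would take a point $v\in\tilde{\mathfrak{a}}^\circ_r(X,p)$, so $v\in E:=\lim_{t\to 0}\langle x_1(t)\hd x_r(t)\rangle$ for curves $x_j(t)\to p$ in $X$, and after shrinking we may assume $\dim\langle x_1(t)\hd x_r(t)\rangle$ is constant equal to $r-1$ for $t\neq 0$. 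I would then lift these $r$-tuples of points to a curve in the Hilbert scheme of $r$ points of $X$, by taking for $t\neq 0$ the reduced scheme $R_t=\{x_1(t),\dots,x_r(t)\}$ (generically distinct, so of length $r$) and letting $R_0$ be the flat limit $\lim_{t\to 0}R_t$ in $\mathrm{Hilb}^r(X)$. This limit $R_0$ is a smoothable length-$r$ scheme; since all the points $x_j(t)$ converge to $p$, the limit is supported at $p$.

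The crux is then: why is $E\subset\langle R_0\rangle$? In general one only has the opposite containment $\langle R_0\rangle\subset E$ (linear span of the limit sits inside the limit of linear spans, by the semicontinuity remark following Lemma \ref{lem:contain}), and this is precisely the phenomenon distinguishing border rank from smoothable rank. The point of the $(r-1)$-st Veronese hypothesis is that it forces these two spans to agree. Concretely, under a $d$-uple Veronese embedding $v_d\colon\BP\VV'\hookrightarrow\BP S^d\VV'$ with $d\geq r-1$, a length-$r$ scheme $R$ imposes independent conditions on degree-$d$ forms, equivalently the ideal sheaf $\mathcal{I}_R$ has $h^1(\mathcal{I}_R(d))=0$; hence the Hilbert function of $R$ in degree $d$ is exactly $r$, and this holds uniformly along the whole family $R_t$ (it is $r$ for $t\neq 0$ trivially, and it cannot drop). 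Constancy of $h^0(\mathcal{O}_{R_t}(d))$ along a flat family makes $\rho_*\mathcal{O}$ locally free, so the linear space $\langle v_d(R_t)\rangle$ — which is cut out by the degree-one part of the ideal of $v_d(R_t)$, i.e.\ by $H^0(\mathcal{I}_{R_t}(d))$ under the Veronese — varies in a flat (hence continuous, closed) way in the Grassmannian $G(r-1,S^d\VV')$. Therefore $\lim_{t\to 0}\langle v_d(R_t)\rangle=\langle v_d(R_0)\rangle=\langle R_0\rangle$, so $E=\langle R_0\rangle\subset\mathfrak{a}^\circ_r(X,p)$, and taking closures gives the result. I would cite \cite[Theorem 5.7]{2015arXiv151105707B} for this Veronese/flatness statement rather than re-deriving it, since the excerpt explicitly says the proposition follows from there.

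I expect the main obstacle to be making the flat-limit and span-continuity argument precise without circularity: one must argue that the scheme-theoretic flat limit $R_0$ really does have length $r$ supported at $p$ (rather than splitting off length at a different point or losing length), and that "$\langle R_t\rangle$ continuous in $t$" is legitimately a consequence of, not a restatement of, the $h^1$-vanishing. Both are handled by the observation that the relevant sheaf $\rho_*\mathcal{O}_{\mathcal{R}}(d)$ on the base curve is locally free of rank $r$ exactly when the $(r-1)$-st (or higher) Veronese is used, which is the content of the cited theorem; the bookkeeping that support stays at $p$ follows since properness of $\mathrm{Hilb}$ forces $R_0\subset X$ and set-theoretically $\mathrm{Supp}(R_0)=\lim\mathrm{Supp}(R_t)=\{p\}$. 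Once those two points are granted, the equality of spans and hence of areoles is immediate.
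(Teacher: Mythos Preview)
Your proposal is correct and matches the paper's approach: the paper's entire proof is the single sentence ``follows from \cite[Theorem 5.7]{2015arXiv151105707B},'' and you have unpacked precisely that citation, using the same $h^1$-vanishing/constancy-of-span mechanism under a sufficiently high Veronese re-embedding. Your sketch is a faithful elaboration of what the paper leaves implicit; the only cosmetic point is that the case where the $x_j(t)$ are not generically distinct (so the reduced scheme has length $<r$) should be dispatched by noting it already lands in a lower areole, but this is routine.
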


Motivated by applications, we restricted ourselves in the definition of areole to smoothable schemes. It may seem that the classification of such local schemes 
should be  easy, that  they should all be \lq almost\rq\  like $\Spec \BC[x]/(x^r)$. As we present below, the story is much more interesting. 

In the Hilbert scheme, 
the locus of schemes supported at $p$ and isomorphic to $\Spec \BC[x]/(x^r)$ is relatively open. Such schemes 
are   called \emph{aligned} \cite{MR1735271} or {\it curvilinear}. The schemes in the closure in the Hilbert scheme of the locus of aligned schemes, i.e., 
schemes that arise as degeneration of aligned schemes, are called {\it alignable}. For small values of $r$ all local smoothable schemes are alignable. 
An example of a scheme that is alignable, but not aligned is $\Spec \CC[x,y]/(x^2,y^2)$.   

A local scheme $\Spec \BC[x_1,\dots,x_n]/I$ is called \emph{Gorenstein} if the ideal $I$ is an \emph{apolar ideal} of a polynomial $f$ in the dual variables.
That is,  the  $x_i$ are differential operators on the dual space
of polynomials and  $I$ is the ideal of   differential operators annihilating $f$.

The Hilbert function of a local scheme with a maximal ideal $\fm$ assigns to $k$ the dimension of $\fm^k/\fm^{k+1}$. It is usually presented as a finite sequence,
as by convention, we omit the infinite string of  zeros after the last nonzero entry.

The last nonzero value of the Hilbert function for a Gorenstein scheme must be equal to $1$, because  if the form $f$ is of degree $d$,  the pairing with differential operators of degree $d$ provides a surjection onto $\BC$.

\begin{example}
The scheme $\Spec \BC[x,y]/(x^2,xy,y^2)$ is not Gorenstein, as its Hilbert function equals $(1,2)$. The scheme $\Spec \BC[x,y]/(x^2-y^2,xy)$ is Gorenstain as the ideal is apolar to $X^2+Y^2$, where $x(X)=1,x(Y)=0$ etc..
\end{example}

Schemes that are local and smoothable do not have to be alignable \cite{briancon}. Even more: there exist subvarieties of the Hilbert scheme corresponding to local, 
smoothable schemes that are of higher dimension than the component of alignable schemes. When $X$ is $n$ dimensional the dimension of locus of alignable 
schemes equals $(r-1)(n-1)$. Already for $r=12$ and $n=5$ there exists another family (also of dimension $44$) of smoothable schemes supported at $p$ that are not 
alignable. For $r=16$ and $n=7$ there exists a family of dimension $104$ of smoothable schemes  supported at $p$ \cite{BCR, JJ}. Summing over different $p$ we obtain 
a family of dimension $111=7\cdot 16 -1$, i.e.~a divisor in the Hilbert scheme! All this motivates one more definition.
\begin{definition}[Bud]
The \textbf{$r$-th open bud} at $p$ is
   \begin{align*}
     \mathfrak{b}^\circ_r(X,p)&:= { \bigcup \{ \langle R \rangle \mid   
                              R\simeq\Spec\CC[x]/(x^r)\text{\ and supported at } p \} },
   \end{align*}
    the \textbf{$r$-th bud} at $p$ is the closure:
   \begin{align*}
     \mathfrak{b}_r(X,p)&:= \overline{ \mathfrak{b}^\circ_r(X,p) },
   \end{align*}
and the \textbf{$r$-th   bud variety} of $X$ is
$$
 \mathfrak{b}_r(X):=\bigcup_{p\in X} \mathfrak{b}_r(X,p).
 $$
\end{definition}

Note  that the bud $\mathfrak{b}_r(X,p)$ contains the linear spans of all alignable schemes of given length that are supported on $p$. These schemes do not have to be Gorenstein. However,  of course  the aligned schemes are Gorenstein.

\begin{example}
In \cite{LMabten} we showed that when $\tdim A=\tdim B=\tdim C=m$,
then   $     \mathfrak{b}_m(Seg(\BP A\times \BP B\times \BP C))
=\ol{GL(A)\times GL(B)\times GL(C)\cdot [T_N]}\subset \BP (A\ot B\ot C)$,
where $T_N$ is a 
tensor such that  $T_N(A^*)\subset B\ot C$ corresponds to the centralizer of   a regular nilpotent element. 
\end{example}

Both areoles and   buds generalize the {\it tangent star} \cite{zak}, which is the case $r=2$.
\begin{proposition}
Let $p$ be a point of a   variety $X\subset \BP \VV$. Then
$$\mathfrak{b}_2(X,p)={\mathfrak{a}}_2(X,p)= \tilde{\mathfrak{a}}_2(X,p)=T^{\star}_pX,$$
where $T^{\star}_pX$ denotes the tangent star of $X$  at $p$.
\end{proposition}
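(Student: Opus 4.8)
The plan is to prove the chain of equalities $\mathfrak{b}_2(X,p)={\mathfrak{a}}_2(X,p)= \tilde{\mathfrak{a}}_2(X,p)=T^{\star}_pX$ by combining Lemma \ref{lem:contain} with the classification of length-two local schemes. First I would observe that Lemma \ref{lem:contain} already gives ${\mathfrak{a}}_2(X,p)\subset \tilde{\mathfrak{a}}_2(X,p)$, so the substantive work is to establish $\tilde{\mathfrak{a}}_2(X,p)\subset \mathfrak{b}_2(X,p)$ together with the reverse inclusion $\mathfrak{b}_2(X,p)\subset {\mathfrak{a}}_2(X,p)$, and finally to identify the common object with the tangent star. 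For the last identification I would unwind the definition of $T^\star_pX$: it is $\bigcup \tlim_{t\to 0}\langle x_1(t),x_2(t)\rangle$ over pairs of curves $x_1(t),x_2(t)$ in $X$ both tending to $p$ (equivalently $\bigcup_{q\in X}\tlim_{x\to p}\langle q(x),p\rangle$ in one standard formulation, but the two-variable version is the right one to match the greater areole). With this description, $T^\star_pX$ is literally the same union that defines $\tilde{\mathfrak{a}}^\circ_2(X,p)$, so $\tilde{\mathfrak{a}}_2(X,p)=T^\star_pX$ is essentially a matter of citing the definition of the tangent star (e.g.\ from \cite{zak}) and matching notation; I would state this carefully since different sources present the tangent star slightly differently.

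Next I would handle $\mathfrak{b}_2(X,p)\subset {\mathfrak{a}}_2(X,p)$: a scheme $R\simeq \Spec\CC[x]/(x^2)$ supported at $p$ and embedded in $X$ is a length-two curvilinear scheme, in particular smoothable in $X$ (any tangent direction at a point of a variety is a limit of secant directions, so $\langle R\rangle$, a tangent line, arises as a limit of secant lines through $p$ and a nearby point of $X$). Hence every spanning line $\langle R\rangle$ occurring in $\mathfrak{b}^\circ_2(X,p)$ occurs in $\mathfrak{a}^\circ_2(X,p)$, giving the inclusion on open loci and then on closures. For the opposite direction, ${\mathfrak{a}}_2(X,p)\subset \mathfrak{b}_2(X,p)$: here I use that up to isomorphism the \emph{only} local scheme of length two is $\Spec\CC[x]/(x^2)$ — this is the $r=2$ case of the remark preceding the Areole definition and is elementary (a local Artinian $\CC$-algebra of length two has $\fm$ one-dimensional and $\fm^2=0$). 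Consequently $\mathfrak{a}^\circ_2(X,p)$ and $\mathfrak{b}^\circ_2(X,p)$ are defined by unions over the \emph{same} set of subschemes, so they coincide, and the areoles coincide after taking closures.

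Finally I would close the loop by showing $\tilde{\mathfrak{a}}_2(X,p)\subset \mathfrak{b}_2(X,p)$, or alternatively invoke Proposition \ref{prop:eq} to short-circuit part of the argument. I would probably prefer the direct route, since it is clean: given curves $x_1(t),x_2(t)\to p$ in $X$ with $\dim\langle x_1(t),x_2(t)\rangle$ constant, the limiting line $E=\tlim\langle x_1(t),x_2(t)\rangle$ passes through $p$ and lies in $T^\star_pX$ by definition of the tangent star; and since the tangent star at a point of any (possibly singular) variety is swept out by tangent lines to branches, i.e.\ by spans of length-two curvilinear schemes at $p$, we get $E\in\mathfrak{b}^\circ_2(X,p)$ — or at worst $E\subset \mathfrak{b}_2(X,p)$ after passing to closures, which is all we need.

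The main obstacle I anticipate is purely bookkeeping around definitions rather than any deep geometry: one must be careful that the tangent star $T^\star_pX$, as defined in \cite{zak}, is genuinely the set of limits of secant lines $\langle q_1(t),q_2(t)\rangle$ with \emph{both} endpoints moving to $p$ (not just one moving while the other stays at $p$, which would give the smaller notion of the tangent cone/tangent space), and that a limit of secant lines through $p$ and a moving point is the same as the span of a limiting length-two scheme supported at $p$ — this last equivalence is exactly the $r=2$ instance of the semicontinuity phenomenon discussed in the remark after Lemma \ref{lem:contain}, and it forces the limiting scheme to have length two and be supported at $p$, hence to be $\Spec\CC[x]/(x^2)$. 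Once these identifications are made explicit, every inclusion in the four-term chain is immediate, and the proposition follows.
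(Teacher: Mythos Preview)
Your argument is correct and uses the same three ingredients as the paper's proof: the fact that every local scheme of length two is isomorphic to $\Spec\CC[x]/(x^2)$ (giving $\mathfrak{b}_2=\mathfrak{a}_2$), the identification $\tilde{\mathfrak{a}}_2(X,p)=T^\star_pX$ directly from the definition of the tangent star, and the equality $\mathfrak{a}_2=\tilde{\mathfrak{a}}_2$. The only difference is organizational. For the middle equality the paper simply invokes Proposition~\ref{prop:eq} with $r=2$, noting that any projective variety is its own first Veronese re-embedding; you instead use Lemma~\ref{lem:contain} for one inclusion and then supply a separate direct argument (or Proposition~\ref{prop:eq}) for the other. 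Since you already list Proposition~\ref{prop:eq} as an option, you effectively have the paper's one-line proof in hand---your ``direct route'' is extra work.

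One small caution: in your direct argument you write that the tangent star is ``swept out by tangent lines to branches.'' That phrasing is not accurate at singular points (e.g.\ at a node the tangent star is the whole plane, not just the two branch tangents). What you actually need, and what suffices, is the weaker statement that every limiting secant line lies in the embedded Zariski tangent space, hence equals $\langle R\rangle$ for some length-two subscheme $R\subset X$ at $p$; that is enough to land in $\mathfrak{b}_2(X,p)$.
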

\begin{proof}
The first equality follows by definition as any local scheme of length two is isomorphic to $\CC[x]/(x^2)$ i.e., aligned. 
The second equality is a special case of Proposition \ref{prop:eq} as any variety is its own first Veronese re-embedding. 
The third is just the definition of  the tangent star.
\end{proof}

Thus  when $r=2$, $\mathfrak{b}_2(X )={\mathfrak{a}}_2(X )= \tilde{\mathfrak{a}}_2(X )$. 
Moreover,     $\fb_2^0(X)=\fb_2(X)$ because all alignable schemes 
of length two are aligned. 

When $r=3$, \cite[Thm. 1.11]{MR3239293} shows that when $X=G/P$ is generalized cominuscule,
they still coincide:  $\mathfrak{b}_3(G/P )={\mathfrak{a}}_3(G/P )= \tilde{\mathfrak{a}}_3(G/P )$.  Moreover,  $\fb^0_3(G/P)=\fb_3(G/P)$ because all the points in the bud give 
aligned schemes.

\medskip

We now bound  the dimensions of all these  varieties. 
Recall that for an $n$-dimensional   variety,  $\tdim \s_r(X)\leq rn+r-1$.
\begin{prop}\label{prop:ineq}
Let  $p$ be a point of an $n$ dimensional   homogeneous variety
 $X\subset \BP^N$.
 Then
 \begin{align*}
  \dim \tilde{\mathfrak{a}}_r(X,p)&\leq rn-n+r-2,
\\
\dim \mathfrak{a}_r(X,p)&\leq rn-n+r-2,\\
\dim \mathfrak{b}_r(X,p)&\leq (r-1)n , 
 \end{align*}
 and hence, 
  \begin{align*}
   \dim \tilde{\mathfrak{a}}_r(X )&\leq rn +r-2,\\
\dim \mathfrak{a}_r(X )&\leq rn +r-2,\\
\dim \mathfrak{b}_r(X )&\leq  r n . \end{align*}
\end{prop}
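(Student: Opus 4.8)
The plan is to bound each pointed version of the variety by a dimension count on an incidence correspondence, and then deduce the global bounds by adding $\dim X=n$ and using that $X$ is homogeneous (so all points look alike and the union over $p$ is irreducible). First I would handle the bud $\mathfrak{b}_r(X,p)$: a curvilinear scheme of length $r$ supported at $p$, $R\simeq\Spec\CC[x]/(x^r)$, is determined by an $(r-1)$-jet of a curve through $p$ in $X$; since $X$ is $n$-dimensional, the space of such jets has dimension $(r-1)n$ (choosing the leading through $(r-1)$-st order terms, each an $n$-dimensional choice), and the span $\langle R\rangle$ depends only on this data, so $\dim\mathfrak{b}^\circ_r(X,p)\leq(r-1)n$ and the bound passes to the closure. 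Adding the $n$ parameters for moving $p\in X$ gives $\dim\mathfrak{b}_r(X)\leq rn$.

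For the greater areole, I would parametrize $\tilde{\mathfrak{a}}^\circ_r(X,p)$ by the closure of the locus of $r$-tuples $(x_1(t),\dots,x_r(t))$ together with a point $v$ in the limiting span. The naive count gives $rn$ for the $r$ moving points plus $r-1$ for $v$ in a $\mathbb{P}^{r-1}$, i.e.\ $rn+r-1$, which is the generic secant bound; the improvement by $n+1$ comes from forcing all $x_j(t)\to p$. Concretely, I would use the $G$-action: because $X=G/P$ is homogeneous, in the limit one of the points is stationary and equal to $p$ — this is exactly \cite[Lemma 2.1]{MR3239293} invoked in the proof of Lemma \ref{normalformlem} — so one point contributes $0$ instead of $n$ parameters, and moreover that stationary point can be taken to be the fixed basepoint rather than a free point of $X$, which is where one more dimension is saved when one later passes to $\tilde{\mathfrak{a}}_r(X)$. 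Carefully: $\tilde{\mathfrak{a}}_r(X,p)$ has dimension at most $(r-1)n + (r-2)$: the $r-1$ non-stationary points give $(r-1)n$, the stationary point is fixed, and $v$ moves in the limit span but that span already contains $p$, cutting the $r-1$ down to $r-2$. Hmm — I should double-check the bookkeeping: $v\in\langle x_1(t),\dots,x_r(t)\rangle\cong\mathbb{P}^{r-1}$ which contains the stationary point $p$, so modulo $p$ the remaining freedom for $[v]$ is $r-2$; combined with $(r-1)n$ this yields $rn-n+r-2$, matching the claim. For $\mathfrak{a}_r(X,p)\subset\tilde{\mathfrak{a}}_r(X,p)$ the same bound follows immediately from Lemma \ref{lem:contain}. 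Summing over $p\in X$ adds $n$, giving $rn+r-2$ for both areoles.

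The cleanest way to organize all three at once is to set up, for each variety, a correspondence $\mathcal{Z}\subset(\text{parameter space of the defining data})\times\mathbb{P}\VV$ whose fibers over the parameter space are the relevant linear spans (of dimension $r-1$, or $r-2$ after removing the forced point $p$), project to $\mathbb{P}\VV$, and read off the bound as (dimension of parameter space) $+$ (fiber dimension). The homogeneity of $X$ enters twice: once to guarantee the limiting span contains a fixed point (killing $n$ from the span count and $1$ more globally), and once to ensure irreducibility so that the union over $p$ genuinely has dimension $\dim(\text{pointed version})+n$ rather than something smaller — but since we only want an upper bound, even the inequality $\dim\bigcup_p\leq\dim(\text{pointed})+\dim X$ suffices and needs no irreducibility.

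\textbf{Main obstacle.} The subtle point — and the step I expect to need the most care — is justifying that \emph{all} points $x_j(t)$ can be taken to limit to $p$ without increasing the parameter count, i.e.\ transferring the conclusion of the normal form Lemma \ref{normalformlem} (which produces one such curve) into a statement about the dimension of the whole family $\tilde{\mathfrak{a}}^\circ_r(X,p)$. One has to be careful that the parametrization by "$r$-tuples of curves through $p$ modulo reparametrization, plus a point in the limit span" actually dominates $\tilde{\mathfrak{a}}^\circ_r(X,p)$ and that no component of larger dimension hides in the closure; here homogeneity and the fact that $G_p$ acts transitively enough on the relevant flags (as spelled out for the two examples of interest) is what rescues the count. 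The bud case is comparatively routine once curvilinear $\Leftrightarrow$ $(r-1)$-jet is made precise; the areole case reduces to the greater areole by Lemma \ref{lem:contain}, so essentially all the work is in the greater areole bound.
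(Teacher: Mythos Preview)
Your bud and areole bookkeeping can be salvaged, but the core step---the greater areole bound---has a genuine gap, and the mechanism you propose for saving the crucial extra ``$-1$'' is incorrect.

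The assertion ``$v$ moves in the limit span but that span already contains $p$, cutting the $r-1$ down to $r-2$'' is false: the limit span $E_0$ is a $\BP^{r-1}$ and $v$ ranges freely over it, so $v$ contributes $r-1$ parameters regardless of whether $p\in E_0$. Likewise ``the $r-1$ non-stationary points give $(r-1)n$'' is not a well-posed count---those are curves, not points, and the limit plane $E_0$ depends on them in no finite-dimensional way. Finally, neither Lemma~\ref{normalformlem} nor \cite[Lemma~2.1]{MR3239293} supplies a stationary point at $p$ for a \emph{general} element of $\tilde{\mathfrak{a}}_r(X,p)$: those results start from a fixed $v$ and act by its symmetry group $G_v$ (or by $G$) to modify a given algorithm, possibly changing $E_0$ and the limiting configuration entirely; there is no reason the new curves still all limit to $p$. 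You correctly flag this as the main obstacle, but it is not overcome.

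The paper's argument bypasses curves and stationary points. One works directly with $pr(S_r(X))\subset X^{(r)}\times G(r,N+1)$, which is irreducible of dimension $rn$. Its intersection with $(\text{small diagonal})\times G(r,N+1)$ is a \emph{proper} closed subvariety (a generic point of $pr(S_r(X))$ has distinct $x_j$), hence of dimension $\leq rn-1$; this ``at most a divisor'' step is where the extra $-1$ you were trying to manufacture actually comes from. Homogeneity makes the fibers over the small diagonal equidimensional, so each has dimension $\leq rn-n-1$; this fiber is exactly the set of limit planes $E_0$ for $\tilde{\mathfrak{a}}^\circ_r(X,p)$. Adding $r-1$ for $v\in\BP E_0$ gives the bound. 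The paper proves the areole bound by the analogous ``punctual locus is a divisor'' argument inside the principal component of the Hilbert scheme; your reduction via Lemma~\ref{lem:contain} is a legitimate shortcut once the greater areole bound is established.

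A smaller slip: for the bud you say the span depends only on the $(r-1)$-jet (dimension $(r-1)n$), hence $\dim\mathfrak{b}^\circ_r(X,p)\leq(r-1)n$. But the bud is a \emph{union} of $\BP^{r-1}$'s indexed by jets, so the naive bound is $(r-1)n+(r-1)$; one must also divide out the $(r-1)$-dimensional reparametrization group of the source curve. The paper instead quotes that the alignable locus at $p$ has dimension $(r-1)(n-1)$ and adds $r-1$ for the span.
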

 \begin{proof}
 The second inequality follows, simply by bounding the dimension of the locus of punctual smoothable schemes as a divisor in the Hilbert scheme. 
 The third equality follows as the locus of alignable schemes is of dimension $(r-1)(n-1)$. 
 
To prove the first inequality consider the projection $pr:S_r(X)\ra X^{(r)}\times G(r,N+1)$.
    The intersection of $pr(S_r(X))$ with the small diagonal in $X^{(r)}$ times the Grassmannian is at most a divisor. 
    Since $X$ is
    homogeneous, the fibers of the projection of the intersection to the small diagonal are all isomorphic, 
    hence are of dimension at most $nr-n-1$. The inequality follows.  
\end{proof}

\begin{remark} The only point in the proof where we used that $X$ was homogeneous
was to have equi-dimensional fibers. We expect that the inequalities remain true for any smooth $X$.
\end{remark}

While the areole and greater areole have the same expected dimension, 
in many cases (for example $r\leq 9$) the areole is of strictly smaller dimension than expected, often coinciding with the bud. Further, 
the areole and the greater areole are not expected to be irreducible. 
The problem of distinguishing between the areole and greater areole appears to be important and difficult. On the other hand the bud is irreducible, as the 
locus of aligned schemes is irreducible in the Hilbert scheme.

By the inequalities in Proposition \ref{prop:ineq},
$\tilde \fa_r(X), \fa_r(X)$, and $\fb_r(X)$ are all proper subvarieties of the secant variety
when $\s_r(X)$ has the expected dimension. Finding the equations of any of the above varieties when $r>2$, even in the case of Segre or Veronese varieties is another important and difficult challenge.

\subsection{The bud and local differential geometry}

We   thank Jaroslaw 
Buczy{\'n}ski and Joachim Jelisiejew for pointing us towards the following result:

\begin{proposition}Let $X\subset \BP \VV$ be a projective variety and  let  $p\in X$ be a smooth point.  Then
\begin{align*}
\fb_r(X,p)=
\overline{\bigcup_{
\stack{x(t)\subset X}{x (t)\ra p} }  \langle x(0),x'(0)\hd x^{(r-1)}(0)\rangle},  
\end{align*}
where the union is taken over all curves $x(\cdot)$ smooth at $p$.
\end{proposition}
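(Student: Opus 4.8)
The plan is to prove the two inclusions separately, using a local coordinate description near the smooth point $p$. Choose affine coordinates on an affine chart of $\BP\VV$ so that $p$ corresponds to a vector $x_0\in\VV$, and realize $X$ locally as the image of an analytic (or formal) parametrization $\phi\colon (\BC^n,0)\to X$ with $\phi(0)=x_0$. A curve $x(\cdot)\subset X$ with $x(t)\to p$ then lifts to a curve $\gamma(t)=\phi(c(t))$ in $\VV$ with $c(0)=0$, and by Taylor expansion $x(0),x'(0),\dots,x^{(r-1)}(0)$ are $\VV$-valued polynomials in the derivatives of $c$ at $0$. Conversely, an aligned scheme $R\simeq\Spec\BC[x]/(x^r)$ supported at $p$ and contained in $X$ is, after a change of the local coordinate, the image under $\phi$ of the length-$r$ scheme on a smooth formal curve germ through $0\in\BC^n$; so $\langle R\rangle$ is spanned by the first $r$ Taylor coefficients of $\phi$ along that germ. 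This matches exactly the right-hand side.

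\textbf{Inclusion $\supseteq$.} First I would show that for a curve $x(\cdot)$ smooth at $p$, the span $\langle x(0),x'(0),\dots,x^{(r-1)}(0)\rangle$ lies in $\fb_r(X,p)$. The osculating flag of a smooth curve is the limit of the secant flags: given $x(\cdot)$, pick $r$ parameter values $t_1(s),\dots,t_r(s)$ all tending to $0$ as $s\to0$ in a way that the corresponding $r$ points $x(t_i(s))$ on $X$ have span limiting to $\langle x(0),\dots,x^{(r-1)}(0)\rangle$ (this is the standard divided-difference/Vandermonde computation: $\det$ of the $r\times r$ coefficient matrix is a nonzero multiple of a Vandermonde, so the limiting span is full-dimensional and equals the osculating space). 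For each fixed $s>0$ the $r$ distinct points on the smooth curve are contained in an aligned length-$r$ subscheme of $X$ supported near $x(0)$; letting $s\to0$ and using properness of the relevant incidence variety, the limiting span is the span of an aligned (hence curvilinear) scheme supported at $p$. Taking closures and the union over all such curves gives the inclusion $\supseteq$.

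\textbf{Inclusion $\subseteq$.} Conversely, take a curvilinear scheme $R\simeq\Spec\BC[x]/(x^r)$ supported at $p$ and contained in $X$. Since $p$ is smooth, such a scheme is cut out on $X$ by a single smooth formal arc: concretely, there is a formal (equivalently, by Artin approximation or simply by truncating, an honest analytic) curve germ $x(\cdot)\subset X$ with $x(0)=p$ whose $(r-1)$-jet defines $R$, i.e. $R$ is the scheme-theoretic image of $\Spec\BC[t]/(t^r)$ under $x(\cdot)$. Then $\langle R\rangle=\langle x(0),x'(0),\dots,x^{(r-1)}(0)\rangle$ by the coordinate computation above (the degree-$\le r-1$ part of the Taylor expansion spans the embedded span of the length-$r$ jet). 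This shows $\mathfrak{b}^\circ_r(X,p)$ is contained in the right-hand side, and taking closures completes the argument.

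\textbf{Main obstacle.} The only genuinely delicate point is the matching between the \emph{linear span} of a curvilinear scheme and the \emph{osculating space} of a representing curve, i.e. verifying that passing to the limit of secant spans of $r$ points on a smooth curve produces precisely the span of the first $r$ Taylor vectors, and that this full-rank condition is generic — this is where the Vandermonde nondegeneracy enters and where one must be careful that the limiting $(r-1)$-plane does not drop dimension. A secondary technical point is ensuring every curvilinear subscheme of $X$ supported at a smooth point $p$ really is the jet of an honest curve in $X$ (not merely a formal one); this follows because smoothness of $p$ lets us lift formal arcs through $\phi$, or one can work formally throughout since only finitely many Taylor coefficients matter. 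Everything else is routine closure-taking and the already-cited fact that aligned schemes form an irreducible locus whose closure defines $\fb_r$.
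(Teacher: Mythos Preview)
Your two-inclusion strategy is the same as the paper's, and the core identification (aligned schemes $\leftrightarrow$ jets of smooth curves) is correct. But there is one genuine error and one substantive difference in method worth flagging.

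\medskip

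\textbf{The inclusion $\supseteq$.} Your argument is needlessly indirect and contains a false intermediate claim: for fixed $s>0$, the $r$ \emph{distinct} points $x(t_1(s)),\dots,x(t_r(s))$ are \emph{not} contained in any aligned length-$r$ subscheme --- an aligned scheme is by definition supported at a single point. The Vandermonde/limit detour is unnecessary. The paper's argument is one line: the $r$-th infinitesimal neighborhood of $p$ on the smooth curve $x(\cdot)$ is itself an embedded aligned scheme $R\simeq\Spec\BC[t]/(t^r)$ supported at $p$, and a linear form vanishes on $R$ iff it kills $x(0),x'(0),\dots,x^{(r-1)}(0)$, so $\langle R\rangle=\langle x(0),\dots,x^{(r-1)}(0)\rangle$ lies in $\fb_r^\circ(X,p)$ directly.

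\medskip

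\textbf{The inclusion $\subseteq$.} Here you and the paper take genuinely different routes. You pass to an analytic chart $\phi:(\BC^n,0)\to X$ and lift the aligned scheme to a polynomial arc in $\BC^n$, then invoke Artin approximation (or truncation) as a black box to get an honest curve. This is correct but produces only an analytic/formal curve and leaves the passage to an algebraic curve unexplained. The paper instead gives a direct algebraic construction that avoids any approximation theorem: since $R$ has one-dimensional Zariski tangent space, the image of its ideal $J$ in $\fm/\fm^2$ is a hyperplane; choose $f_1,\dots,f_{\dim X-1}\in J$ whose classes span it, clear denominators to get regular functions $h_i$ on an affine open $U\subset X$, and let $Z=\{h_1=\dots=h_{\dim X-1}=0\}\subset U$. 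Then $T_pZ$ is one-dimensional, so the component of $Z$ through $p$ is an algebraic curve smooth at $p$ that contains $R$. This is more concrete, yields an algebraic (not merely analytic) curve, and is what makes the paper's subsequent remark --- that restricting to analytic curves gives the same variety --- a nontrivial addendum rather than a tautology.
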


In the language of differential geometry, $\fb_r(X,p)$ is the {\it  $(r-1)$-st osculating cone to $X$ at $p$}. Its span
is the $(r-1)$-st osculating space. 

\begin{remark} We obtain the same variety if we  take the union over analytic curves. 
\end{remark} 

\begin{proof}  
Given a smooth curve $C$, for each $r$, one has  the embedded  aligned scheme of length at most $r$ supported at $p$,  with
  span  $\langle x(0),x'(0)\hd x^{(r-1)}(0)\rangle$,  determined by it.

Given an aligned scheme $S$, we claim there exists a curve that contains it, is smooth at $p$,  and is contained in $X$. 
Let $\fm$ be the maximal ideal defining   $p$ in $\cO(X)_p$, the local ring of $p\in X$. 
Let $J$ be the ideal defining $S$ in $\cO(X)_p$. Since  the tangent space of $S$ is one-dimensional,  
  $(J+\fm^2)/\fm^2$ 
is a hyperplane in $\fm/\fm^2=T^*_pX$. Let $f_1,\dots,f_{\dim X-1}\in J$ span this hyperplane. 
Locally we may write $f_i=h_i/s_i$ with $h_i\in I(S)$ and  $s_i\in \BC[\VV]$ with $s_i(p)\neq 0$.
Let $U=X\backslash \cup_i \tzeros(s_i)$. Then the $h_i$ are lifts of the $f_i$ to $\BC[U]$. Consider the
subscheme (possibly reducible, non-reduced)   $Z\subset U$ they define. The Zariski tangent space $T_pZ$  is one
dimensional, so $Z$  must also be locally  one
dimensional at $p$ (at most one-dimensional because the local dimension is at most the dimension of $T_pZ$, 
  and at least because  we used $\tdim X-1$ equations), hence a
component through $p$ must be a curve, smooth at $p$, and this curve has
the desired properties.
\end{proof}

For a subvariety $X\subset \BP \VV$ and a smooth point
$x\in X$, there is a sequence of differential invariants
called the {\it fundamental forms} $\BF\BF_k: S^kT_xX\ra N^j_xX$,
where $N^j_xX$ is the $j$-th normal space. After
making choices of splittings and ignoring twists by line
bundles, write $\VV=\hat x\op T_xX\op N^2_xX\opc N^f_xX$.
See \cite[\S 2.2]{MR1966752} or \cite{MR3239293} for a quick introduction.
Adopt the notation $\BF\BF_1: T_xX\ra T_xX$ is the
identity map.

Let $X=G/P\subset \BP \VV$ be  generalized  cominuscule.
(This is a class of homogeneous varieties that includes  Grassmannians,
Veroneses and Segre varieties.) Then the only projective differential
invariants of $X$ at a point are the fundamental forms, and these
are easily (in fact pictorially) determined \cite{MR1966752}.

Let  $X$ be   generalized cominuscule, let $p=[v]$ and
let $v_1\hd v_{r-1}\in \hat T_pX$.
Then  calculations in \cite{MR3239293} show that  a general point of the bud $\fb_r(X,p)$ is
$$
[v+ \sum_{k=1}^{r-1}  \sum_{\ j_1+\cdots + j_k=r-1}
\BF\BF_k(v_{i_1}\hd v_{i_{r-1}})].
$$

\begin{example}
When $X=v_d(\BP W)$, and $p=[w^d]$, then elements of
$\hat T_pX$ are of the form $w^{d-1}u$ and
$$\BF\BF_k(w^{d-1}u_1\hd w^{d-1}u_k)=w^{d-k}u_1\cdots u_k.
$$
Thus a general point of the bud is of the form
$$
[\sum_{k=0}^{r-1}\sum_{\ i_1+\cdots + i_k=r-1}w^{d-k}u_{i_1}\cdots u_{i_{k}}].
$$
\end{example}

\begin{example}
The fundamental forms of Segre varieties are well-known. In particular,  for a $k$-factor Segre, the
last nonzero fundamental form is $\BF\BF_k$. The second fundamental form at $[a_1\otc a_k]$ is spanned by the quadrics
generating the ideal of $\BP (A_1/a_1)\sqcup \cdots \sqcup \BP (A_k/a_k)\subset \BP (A_1/a_1 \op \cdots \op 
  A_k/a_k)\simeq \BP T_{[a_1\otc a_k]}Seg(\BP A_1\ctimes \BP A_k)$. 
A general point of $\mathfrak{b}_r(Seg(\BP A_1\ctimes \BP A_k)),[a_1\otc a_k])$ is of the form
$$
[\sum_{i_1+\cdots + i_k \leq r-1} a_{1,i_1}\otc a_{k,i_k}]
$$
where $a_{j,i_j}\in A_j$  are arbitrary elements with $a_{j0}=a_j$.
\end{example}

\subsection{Examples of points  not in the open $r$-bud  of  generalized cominuscule varieties}
 
 Let $X\subset \BP \VV$ be generalized cominuscule.

Our examples are constructed from parametrized curves $x_j(t)$ in $X$. 
The general procedure to obtain the  
scheme to which the points degenerate as $t\rightarrow 0$ is as follows:
\begin{enumerate}
\item A Zariski open subset of $X$ has a rational parametrization, and 
{\it a priori} we are dealing with $r\tdim X$   different
$\BC[t]$ coefficients, but in practice
the number   is much smaller. For example, in the $k$-factor Segre, 
one is immediately reduced to $rk$ coefficients. Any scheme of length
$r$ can be embedded into a space of dimension $r-1$. So we work in
a space of dimension $r-1$. 

\item Find the ideal $I$ of polynomial equations that defines the curves as a parametric 
family over $\CC\times\CC^{r-1}$, where the first component corresponds to the variable $t$.
\item The desired scheme is given by  the ideal   $(I,t)$, which may be considered as a subscheme of $\BC^u$ for some $u\leq r-1$.    
\end{enumerate}
As our curves are given parametrically, the first two steps are instances of the implicitization problem. For the third,  one   substitutes $t=0$
into a set of  generators.  

\medskip

We already saw that the first possible example of a point not in the open bud is when $r=4$. 
 
Let  $r=4$,  and  consider
$p= \BF\BF_2(v_1,v_2)+v_3$, where $v_j\in T_xX$.
When $X=Seg(\BP A\times \BP B\times \BP C)$, 
$$
p= a_1\ot b_1\ot c_{4}+
a_1\ot b_{4}\ot c_1+a_{4}\ot b_1\ot c_1+\sum_{\s\in \FS_3}a_{\s(1)}\ot b_{\s(2)}\ot c_{\s(3)}.
$$
When $X=v_3(\BP W)$, then $p=xyz + x^2w$.

In the Segre case,  $p$ is in the span of the limit $4$-plane
of the  following four curves:   
\begin{align*}
x_0(t)&=a_1\ot b_1\ot c_1,\\
x_1(t)&=(a_1+ta_2+t^2a_4)\ot (b_1+tb_2+t^2b_4)\ot (c_1+tc_2+t^2c_4),\\
x_2(t)&=
(a_1+ta_3 )\ot (b_1+tb_3)\ot (c_1+tc_3)\\
x_3(t)&=
(a_1-t(a_2+a_3) )\ot (b_1-t(b_2+b_3))\ot (c_1-t(c_2+c_3)).
\end{align*}
If we set $b_j=c_j=a_j$, we obtain the corresponding curves in the Veronese $v_3(\BP A)$. 

Consider the affine open subset  $\CC^3\times \CC^3\times \CC^3$ where the coordinate $a_1\ot b_1\ot c_1$ is nonzero. 
All of the curves belong to it. Since the coefficients in $\BC[t]$ appearing
for each $j$ is the same for $a_j,b_j,c_j$, we may reduce to $\BC^3$.

We have reduced to four curves of the form:
$(y_1(t),y_2(t),y_3(t))$:  $(0,0,0)$, $(t,0,t^2)$, $(0,t,0)$, $(-t,-t,0)$. They satisfy the equation
$y_3=y_1(y_2+t)$, so  we may focus on the first two coordinates. Hence we have four points in the projective plane - a complete intersection of two quadrics.
The equations in $I$ are now of the form: 
$$y_1(y_1-t-2y_2), y_2(2y_1+t-y_2).$$ Substituting $t=0$ we obtain the annihilators of the nondegenerate quadratic form $Y_1^2+Y_1Y_2+Y_2^2$ (where $Y_j$ is
dual to $y_j$), i.e.,
the limiting scheme is isomorphic to $\Spec \CC[y_1,y_2]/(y_1y_2,y_1^2-y_2^2)$, that is Gorenstein alignable, but not aligned. 
 In particular, it is in the bud, but not the open bud. The Hilbert function equals $(1,2,1)$

\begin{example}[The Coppersmith-Winograd tensor]
The (second) Coppersmith-Winograd tensor generalizes the example above. It is
\be\label{tildeTcw}
\tilde T_{q,CW}:= \sum_{j=1}^q (a_0\ot b_j\ot c_j+ a_j\ot b_0\ot c_j+ a_j\ot
b_j\ot c_0 )
+a_0\ot b_0\ot c_{q+1}+ a_0\ot b_{q+1}\ot c_{0}+ a_{q+1}\ot b_0\ot c_0
\in \BC^{q+2}\ot \BC^{q+2}\ot \BC^{q+2}
\ene
It equals
\begin{align*}
\tlim_{t\ra 0} [  
&\sum_{i=1}^q \frac 1{t^2}(a_0+ta_i)\ot (b_0+tb_i)\ot (c_0+tc_i)\\
&-\frac 1{t^3}(a_0+t^2(\sum_{j=1}^qa_j))\ot (b_0+t^2(\sum_{j=1}^qb_j))\ot (c_0+t^2(\sum_{j=1}^qc_j))\\
&+[\frac 1{t^3}-\frac q{t^2}](a_0+t^3a_{q+1} )\ot (b_0+t^3b_{q+1})\ot (c_0+t^3a_{c+1})].
\end{align*}

The Coppersmith-Winograd tensors are symmetric, the first  corresponds to the polynomial
$x(y_1^2+\cdots + y_q^2)$, and the second (which is above), the polynomial
$x(xz+ y_1^2+\cdots + y_q^2)$. These polynomials have symmetric ranks respectively 
$2q+1$ and $2q+3$ (respectively shown in  \cite{MR2628829,2015arXiv151204905C}).
In \cite{2015arXiv150403732L} we showed these agree with their tensor ranks - thus the Comon conjecture \cite{Como02:oxford},
that the rank and symmetric rank of a symmetric tensor agree, holds for these tensors. 
Moreover, since our border rank algorithm is symmetric and matches the lower bound, the border rank version
of the Comon conjecture \cite{MR3092255} holds for these tensors as well.

Since the tensor is symmetric, we may immediately reduce to $\BC^{q+2}$ and work in the open set
where $a_0=1$. Then in the resulting $\BC^{q+1}$ the curves are:
$$
(t,0\hd 0), \ (0,t,0\hd 0)\hd (0\hd 0,t,0), (t^2\hd t^2,0), (0\hd 0,t^3)
$$
We see that $y_{q+1}=t^3-(\sum_{i=1}^qy_i)t^2+y_1y_2q-(\sum_{i=1}^qy_i)t/q+(\sum_{i=1}^qy_i^2)/q-y_1y_2/q$, so we are reduced to the curves
$$
(0\hd 0), \ (t,0\hd 0), \ (0,t,0\hd 0)\hd (0\hd 0,t), (t^2\hd t^2) 
$$
which satisfy the equations
$y_i(y_i-t)-y_j(y_j-t)$ and $y_i(ty_i-t^2-(t-1)y_j)$ for all $i\neq j$.
  Hence in the limit we obtain the Gorenstein scheme given by the annihilators of
  the nondegenerate quadric, namely  $\Spec \CC[y_1,\dots,y_{q-1}]/(y_iy_j,y_i^2-y_j^2)_{1\leq i<j\leq q-1}$.
\end{example} 

\begin{remark} The schemes that we obtain are artifacts of
the  border rank algorithms we choose and are not intrinsic to the  tensors. Even if we restrict to schemes/algorithms of minimal degree their uniqueness is related to generalized identifiability questions.
\end{remark}

\subsection{The bud and matrix multiplication}
 If one could make the stronger statement that
if $\ur(\Mn)=r$, then $\Mn\in \fb_r( Seg(\BP A\times \BP B\times \BP C),k))$ for some $k\in\cK$, then
one could probably prove  $\ur(\Mn)\geq 2\nnn^2-1$.
We calculated this for $\nnn=3$. The full proof involves many cases.
Here we present the two extreme cases  to
illustrate the idea.

Recall that a point of the open bud is of the form
$$
\sum_{3\leq i+j+k\leq r+2} a_i\ot b_j\ot c_k
$$
for some $a_i\in A$, $b_j\in B$, and $c_k\in C$.

\begin{proposition} Say $\Mn\in \fb^0_{2\nnn^2-2}( Seg(\BP A\times \BP B\times \BP C),k))$.
Then one cannot have any two of the three sets of vectors  $\{a_1\hd a_{\nnn^2}\}$, $\{b_1\hd b_{\nnn^2}\}$, $\{c_1\hd c_{\nnn^2}\}$,   linearly independent,
nor can one have any of  $\tdim \langle a_1\hd a_{2\nnn^2-\nnn-1}\rangle$,
$\tdim \langle b_1\hd b_{2\nnn^2-\nnn-1}\rangle$, or $\tdim \langle c_1\hd c_{2\nnn^2-\nnn-1}\rangle$ less than  $\nnn^2$.
\end{proposition}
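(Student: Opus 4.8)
Assume $\Mn\in\fb^0_{2\nnn^2-2}(Seg(\BP A\times\BP B\times\BP C),k)$ for some $k\in\cK$, so that
$$
\Mn=\sum_{3\le i+j+k\le 2\nnn^2}a_i\ot b_j\ot c_k
$$
with $a_1\ot b_1\ot c_1=k$ the stationary point of $\cK$. Recall from Proposition \ref{ckprop} that $k=(\mu\ot v)\ot(\nu\ot w)\ot(\o\ot u)$ with $\mu(u)=\nu(v)=\o(w)=0$; in particular $a_1=\mu\ot v\in A=U^*\ot V$ is a rank-one matrix. The plan is to exploit the Strassen/commutation constraints that $\Mn$ must satisfy, combined with the very rigid triangular shape forced by membership in the open bud.

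\textbf{Key steps.} First I would record the structure of $\Mn(A^*)\subset B\ot C$ as a space of $\nnn^2\times\nnn^2$ matrices: it is $\{X\ot\Id_W : X\in V^*\ot V\}$ up to the standard identification, so it is an abelian subspace of dimension $\nnn^2$ whose elements all have rank divisible by $\nnn$, and whose generic element has rank $\nnn^2$. Next, from the bud expansion, $\Mn(A^*)$ is spanned by the ``slices'' obtained by contracting with the dual basis; because the indices satisfy $i+j+k\le 2\nnn^2$ with $a_1$ the leading term, the contraction against $a_1^*$ already produces $\sum_{j+k\le 2\nnn^2-1}b_j\ot c_k$, i.e.\ a matrix supported on the whole bud except the single ``corner'' term. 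I would then run the border-substitution philosophy of \S\ref{braftsect}: if, say, $\{b_1\hd b_{\nnn^2}\}$ and $\{c_1\hd c_{\nnn^2}\}$ were both linearly independent, then the $b_j,c_k$ with $j,k\le\nnn^2$ would give honest bases, the triangular shape $j+k\le 2\nnn^2$ would be literally a staircase, and the resulting tensor would be (a degeneration of) the big-staircase tensor, which one checks cannot satisfy the commutation/rank conditions on $\Mn(A^*)$ — this is exactly the contradiction that forces the first assertion. For the second assertion, suppose $\tdim\langle a_1\hd a_{2\nnn^2-\nnn-1}\rangle=\nnn^2$, i.e.\ these vectors already span $A$. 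I would argue this is incompatible with the shape of the index set: having the $A$-vectors span ``early'' (by index $2\nnn^2-\nnn-1$) forces, via the inequality $i+j+k\le 2\nnn^2$, that the $B$- and $C$-contributions are confined to indices $j+k\le \nnn+1$, so $\Mn(A^*)=\langle a_1^*,\ldots\rangle$-contractions land in a space spanned by very few $b_j\ot c_k$; counting dimensions, $\Mn(A^*)$ would then have rank bounded well below $\nnn^2$ as a space of matrices (each slice lying in $\langle b_1,\dots,b_{\nnn+1}\rangle\ot\langle c_1,\dots,c_{\nnn+1}\rangle$ or similar), contradicting $\dim\Mn(A^*)=\nnn^2$ together with the concision of $\Mn$ (all three flattenings have full rank $\nnn^2$). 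The symmetry of $\Mn$ under cyclic permutation of $A,B,C$ then yields the statements for the other two triples.

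\textbf{Main obstacle.} The delicate point is the second assertion: one must convert ``the first $2\nnn^2-\nnn-1$ of the $a_i$ already span $A$'' into a genuine restriction on where the $b_j,c_k$ can live, and this requires being careful about which monomials $a_i\ot b_j\ot c_k$ actually occur — the condition $3\le i+j+k\le 2\nnn^2$ allows, e.g., $a_{2\nnn^2-2}\ot b_1\ot c_1$, so one cannot naively say high-$i$ terms are absent. The right move is probably to argue about the \emph{$A$-flattening}: $\Mn$ viewed in $A\ot(B\ot C)$ has rank $\nnn^2$, and the bud shape exhibits it inside $\langle a_1,\ldots,a_{2\nnn^2-\nnn-1}\rangle\ot\langle(\text{small span of }b_j\ot c_k)\rangle$ only after one checks that the complementary monomials (those with $i\ge 2\nnn^2-\nnn$) can only pair with $b_1,c_1$ or similarly restricted vectors, so they do not enlarge the relevant $B\ot C$-span. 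Making this ``staircase bookkeeping'' precise — and verifying that it genuinely forces a rank drop contradicting concision — is the heart of the argument; the abelianity of $\Mn(A^*)$ (Strassen's equations) is the tool that rules out the borderline configurations where pure dimension counting is not quite enough.
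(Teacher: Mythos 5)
The proposal has two serious problems. First, and most importantly, the argument for the second assertion runs in the wrong logical direction. The proposition asserts that none of $\tdim\langle a_1\hd a_{2\nnn^2-\nnn-1}\rangle$ (and its cyclic analogues) can be \emph{less} than $\nnn^2$; to prove this by contradiction you must \emph{assume} $\tdim\langle a_1\hd a_{2\nnn^2-\nnn-1}\rangle<\nnn^2$ and derive a contradiction. You instead assume $\tdim\langle a_1\hd a_{2\nnn^2-\nnn-1}\rangle=\nnn^2$ (``these vectors already span $A$'') and try to derive a contradiction from that, which would prove the opposite of the statement. Moreover, even granting the direction, the inference that spanning $A$ ``early'' confines $j+k\le\nnn+1$ is false: small-$i$ terms $a_i\ot b_j\ot c_k$ with large $j,k$ are always allowed by $i+j+k\le 2\nnn^2$. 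What the paper actually does is the mirror image and much simpler: if $\tdim\langle a_1\hd a_{2\nnn^2-\nnn-1}\rangle<\nnn^2$, choose $0\neq\a\in\langle a_1\hd a_{2\nnn^2-\nnn-1}\rangle\upperp$; contracting $\Mn$ with $\a$ kills all terms with $i\leq 2\nnn^2-\nnn-1$, so the surviving terms have $i\geq 2\nnn^2-\nnn$ and hence $j+k\leq\nnn$, forcing $\a\Mn\subset\langle c_1\hd c_{\nnn-1}\rangle$. But $\a\Mn$ is a nonzero one-sided ideal in $\Mat_\nnn$, whose smallest possible dimension is $\nnn$, a contradiction.

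Second, for the first assertion your plan is too vague to constitute a proof. Saying the ``big-staircase tensor cannot satisfy the commutation/rank conditions'' names a direction but does not give the argument, and invoking the abelianity of $\Mn(A^*)$ (Strassen's equations) is a red herring here. The paper's actual mechanism is an ideal-theoretic clash, not a commutation condition: assuming (WLOG by cyclic symmetry) $a_1\hd a_{\nnn^2}$ and $c_1\hd c_{\nnn^2}$ are independent, take $\a\in\langle a_1\hd a_{\nnn^2-1}\rangle\upperp$ with $\a(a_{\nnn^2})\neq 0$ and $\gamma\in\langle c_1\hd c_{\nnn^2-1}\rangle\upperp$ with $\gamma(c_{\nnn^2})\neq 0$. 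A triangular-matrix computation (using the index bound $i+j+k\le 2\nnn^2$ and the independence hypotheses) shows that the left ideal $\Mn\a$ and the right ideal $\gamma\Mn$ are \emph{both} equal to $\langle b_1\hd b_{\nnn^2-1}\rangle$. A nonzero proper subspace of $\Mat_\nnn$ that is simultaneously a left and a right ideal is a proper nonzero two-sided ideal, contradicting simplicity. This explicit identification of a left ideal with a right ideal is the key step missing from your sketch, and it is where the linear independence of two of the three families of vectors gets used.
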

\begin{proof}
For the first case, assume without loss of generality (by cyclic symmetry) that $a_1\hd a_{\nnn^2}$ and  $c_1\hd c_{\nnn^2}$ are  linearly independent.
Let $\a\in \langle a_1\hd a_{\nnn^2-1}\rangle\upperp$ be non-zero so
$\a(a_{\nnn^2})\neq 0$ and similarly $\gamma\in \langle c_1\hd c_{\nnn^2-1}\rangle\upperp$, $\gamma (c_{\nnn^2})\neq 0$.
Consider the proper   left ideal $\Mn\a\subset B$. (Here $A,B,C$ are all the same algebra of
$\nnn\times\nnn$ matrices, but we distinguish them for clarity.)
Then, viewing matrix multiplication as $C\times A\ra B$,  $\Mn\a\subset B$ is the span  
of $b_1\hd b_{\nnn^2-1}$ because  for each choice of $\tilde\g\in C$, the   vector:
$$
(\tilde \g(c_{\nnn^2-1})\hd \tilde \g(c_1))
\begin{pmatrix} \a(a_{\nnn^2}) & 0 & \cdots & 0\\
\a(a_{\nnn^2+1}) & \a(a_{\nnn^2}) & 0 & \cdots \\
\vdots &  & \ddots &   \\
\a(a_{2\nnn^2-2}) & \cdots & &  \a(a_{\nnn^2})
\end{pmatrix}\begin{pmatrix}b_1\\ b_2\\ \vdots \\ b_{\nnn^2-1}\end{pmatrix}\in B.
$$
appears in $\Mn\a$. 
  On the other hand,  the right ideal $\gamma\Mn\subset B$,  is  the span of the   vectors, for each choice of
  $\tilde \a\in A^*$, 
$$(b_{\nnn^2-1} \hd b_1 )\begin{pmatrix} \gamma(c_{\nnn^2}) & 0 & \cdots & 0\\
 \gamma(c_{\nnn^2+1}) &  \gamma(c_{\nnn^2}) & 0 & \cdots \\
\vdots &  & \ddots &   \\
\gamma(c_{2\nnn^2-2}) & \cdots & &  \gamma(c_{\nnn^2})
\end{pmatrix}
\begin{pmatrix}\tilde\a(a_1)\\ \tilde\a(a_2)\\ \vdots \\ \tilde\a(a_{\nnn^2-1})\end{pmatrix}, 
$$
which is also the span  
of $b_1\hd b_{\nnn^2-1}$, 
a contradiction.

For the second case,  assume $\tdim \langle a_1\hd a_{2\nnn^2-\nnn-1}\rangle<\nnn^2$. There exists a nonzero $\a\in \langle a_1\hd a_{2\nnn^2-\nnn-1}\rangle\upperp$.
Then the non-zero  left ideal $\a\Mn $ is a subspace of $\langle c_1\hd c_{\nnn-1}\rangle$, but the
smallest dimension of a left ideal is $\nnn$, a contradiction.
\end{proof}

\section{A border rank analog of the AFT theorem}\label{braftsect}
We recall the  Alexeev-Forbes-Tsimerman 
variant of the substitution method, as rephrased in \cite{LMabten}:
 
\begin{proposition}\cite[Appendix
B]{alexeev+forbes+tsimerman:2011:tensor-rank},  \cite[Chapter 6]{blaser2014explicit}\label{prop:slice}
Fix a basis $a_1\hd a_{\aaa}$ of $A$.
Write  $T=\sum_{i=1}^\aaa a_i\otimes M_i$, where $M_i\in B\otimes C$.
Let $\bold R(T)=r$ and $M_1\neq 0$. Then there exist constants $\lambda_2,\dots,
\lambda_m$, such that  the tensor
$$\tilde T:=\sum_{j=2}^m a_j\otimes(M_j-\lambda_j M_1)\in  a_1 \upperp \ot B\ot
C,$$
has rank at most $r-1$. Moreover, if $\trank (M_1)=1$ then for any
choices of  $\lambda_j$ we have $\bold R(\tilde T)\geq r-1$.
\end{proposition}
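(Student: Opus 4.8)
The plan is to prove the two halves separately, since they use opposite inequalities. For the existence of $\lambda_2,\dots,\lambda_m$ with $\bold R(\tilde T)\le r-1$: fix a rank decomposition $T=\sum_{k=1}^r \alpha_k\otimes\beta_k\otimes\gamma_k$ with $\alpha_k\in A$, $\beta_k\in B$, $\gamma_k\in C$. Since $M_1\ne 0$, applying the dual functional $a_1^*$ (from the dual basis of $a_1\hd a_\aaa$) to the $A$-slot gives $M_1=\sum_k a_1^*(\alpha_k)\beta_k\otimes\gamma_k\ne 0$, so at least one $k$ has $a_1^*(\alpha_k)\ne 0$; relabel so this is $k=r$ and rescale so $a_1^*(\alpha_r)=1$. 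Now change basis of $A$: replace $a_1$ by $\alpha_r$ — more precisely, note that for any functional $\ell\in A^*$ vanishing on $a_1$, the substitution $a_j\mapsto a_j-\lambda_j a_1$ for suitable $\lambda_j$ realizes the projection of $A$ onto $a_1^\perp$ along $\alpha_r$. Concretely, set $\lambda_j := a_1^*(\alpha_r)^{-1}$ times nothing — rather, choose $\lambda_j$ so that $a_j-\lambda_j a_1$ lies in the span of $\{\alpha_r\}^\perp$-complement... the clean way: apply the linear projection $\pi:A\to a_1^\perp$ with kernel $\CC\alpha_r$ to the first factor of $T$. Then $\pi(T)=\sum_{k=1}^{r-1}\pi(\alpha_k)\otimes\beta_k\otimes\gamma_k$ because $\pi(\alpha_r)=0$, so $\bold R(\pi(T))\le r-1$. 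Finally observe that $\pi(a_1)=0$ and $\pi(a_j)=a_j-\lambda_j a_1$ for the constants $\lambda_j=a_1^*(\alpha_r)^{-1}a_j^*(\alpha_r)$ read off from $\pi$; hence $\pi(T)=\tilde T$ for this choice of the $\lambda_j$, giving the first claim.

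For the lower bound $\bold R(\tilde T)\ge r-1$ when $\trank(M_1)=1$: suppose $\tilde T=\sum_{k=1}^s \alpha_k\otimes\beta_k\otimes\gamma_k$ is a rank decomposition in $a_1^\perp\otimes B\otimes C$ with $s=\bold R(\tilde T)$. Recover $T$ by adding back the $a_1$-slice: $T=\tilde T + a_1\otimes N$ where, writing $M_1$ in rank-one form $M_1=b\otimes c$, one has $N=M_1+\sum_{j\ge2}\lambda_j$-corrections$=b\otimes c$ (the substitution only moved multiples of $M_1$, which is itself rank one) — so $T=\sum_{k=1}^s\alpha_k\otimes\beta_k\otimes\gamma_k + a_1\otimes b\otimes c$, exhibiting $\bold R(T)\le s+1$. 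Since $\bold R(T)=r$, this forces $s\ge r-1$. The point is that rank-one-ness of $M_1$ is exactly what makes the single added term $a_1\otimes b\otimes c$ cost only one in rank.

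The main obstacle is bookkeeping in the first half: making precise that the Alexeev--Forbes--Tsimerman substitution $a_j\mapsto a_j-\lambda_j a_1$ is literally the coordinate expression of the projection $\pi:A\to a_1^\perp$ along the direction $\alpha_r$ singled out by the decomposition, and that $\pi$ applied to all of $T$ — not just a generic change of basis — produces exactly $\tilde T$ with no residual $a_1$-component. I expect this to be routine once the dual basis and the projection are set up carefully, but it is the step where an off-by-one or a misidentified constant could creep in. The second half is essentially immediate given the rank-one hypothesis on $M_1$.
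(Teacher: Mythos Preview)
The paper does not prove this proposition; it is stated with citations to \cite{alexeev+forbes+tsimerman:2011:tensor-rank} and \cite{blaser2014explicit} and then immediately used, so there is no in-paper proof to compare against. Your strategy is the standard one and is sound, but both halves contain the bookkeeping slips you yourself flagged as the likely obstacle.

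In the first half, your abstract definition of $\pi$ as the projection $A\to\langle a_2,\dots,a_{\aaa}\rangle$ with kernel $\BC\alpha_r$ is correct and does yield $\bold R(\pi(T))\le r-1$. But the explicit formulas you then write, $\pi(a_1)=0$ and $\pi(a_j)=a_j-\lambda_j a_1$, contradict that definition: the kernel is $\BC\alpha_r$, not $\BC a_1$, so in general $\pi(a_1)\ne 0$; and $a_j$ for $j\ge 2$ already lies in the image, so $\pi(a_j)=a_j$. The correct computation (after normalizing $a_1^*(\alpha_r)=1$ and setting $\lambda_j=a_j^*(\alpha_r)$, which agrees with your formula for $\lambda_j$) is $\pi(a_1)=a_1-\alpha_r=-\sum_{j\ge 2}\lambda_j a_j$, whence
\[
\pi(T)=\pi(a_1)\ot M_1+\sum_{j\ge 2}a_j\ot M_j=\sum_{j\ge 2}a_j\ot(M_j-\lambda_j M_1)=\tilde T,
\]
as desired.

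In the second half, the difference $T-\tilde T$ is not $a_1\ot N$ but rather
\[
T-\tilde T=a_1\ot M_1+\sum_{j\ge 2}\lambda_j\, a_j\ot M_1=\Bigl(a_1+\sum_{j\ge 2}\lambda_j a_j\Bigr)\ot M_1.
\]
Since $M_1=b\ot c$ has rank one, this is a single rank-one tensor, so $\bold R(T)\le \bold R(\tilde T)+1$ and hence $\bold R(\tilde T)\ge r-1$. Your conclusion is right once the first factor of the correction term is identified correctly.
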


Here is  a border rank version:

\begin{lemma}\label{borderaft} Let $T\in A\ot B\ot C$. Then there exists
a hyperplane $H_A\subset A^*$ such that $\ur(T|_{H_A\times B^*\times C^*})\leq \ur(T) -1$.

In other words, there exists $a\in A$ such that image $T'$ of $T$ under the projection 
$A\ot B\ot C\ra (A/a)\ot B\ot C$ has $\ur(T')\leq \ur(T)-1$. 
\end{lemma}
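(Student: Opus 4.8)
The plan is to lift the Alexeev--Forbes--Tsimerman substitution step (Proposition \ref{prop:slice}) to the level of border rank algorithms, using the stationary-point normalization recalled at the start of \S\ref{norformsect}. I will prove the equivalent second form of the statement: I produce $a\in A$ such that the image $T'$ of $T$ under the projection $\phi_a\colon A\ot B\ot C\ra (A/\langle a\rangle)\ot B\ot C$ satisfies $\ur(T')\leq\ur(T)-1$; the first form then follows since $T|_{H_A\times B^*\times C^*}=T'$ under the identification of $H_A^*$ with $A/\langle a\rangle$, where $H_A\subset A^*$ is the hyperplane of functionals vanishing on $a$. Set $r=\ur(T)$, and assume $T\neq 0$, so $r\geq 1$. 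Since $Seg(\BP A\times\BP B\times\BP C)$ is homogeneous, \cite[Lemma 2.1]{MR3239293} furnishes a border rank $r$ algorithm for $T$ --- a family $E_t$ (a curve, or, by the remark following the definition of border rank algorithm, a convergent sequence $E_{t_n}$) with $[T]\in\BP E_0$, $E_0=\tlim_{t\ra 0}E_t$, and $E_t$ spanned by $r$ points $y_1(t)\hd y_r(t)$ of the Segre variety for $t\neq 0$ --- together with a stationary point $x_1\in Seg(\BP A\times\BP B\times\BP C)$ with $x_1\in\BP E_t$ for every $t$. Writing $x_1=[a\ot b_1\ot c_1]$ with $a\in A\setminus\{0\}$, this $a$ is the candidate.

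The key step is that $\phi_a(x_1)=0$ for \emph{every} $t$, not merely in the limit: the stationary rank one tensor $x_1$ lies in $E_t\cap\ker\phi_a$ for all $t\neq 0$, hence $\dim\phi_a(E_t)\leq r-1$. On the other hand $\phi_a(E_t)=\langle\phi_a(y_1(t))\hd\phi_a(y_r(t))\rangle$ is spanned by $r$ tensors of rank at most one; since they span a space of dimension $<r$ they are linearly dependent, so $\phi_a(E_t)$ is already spanned by $r-1$ of them, and, discarding those that vanish, by at most $r-1$ points of $Seg(\BP(A/\langle a\rangle)\times\BP B\times\BP C)$. Thus $\BP\phi_a(E_t)\subset\s_{r-1}(Seg(\BP(A/\langle a\rangle)\times\BP B\times\BP C))$ for each $t\neq 0$. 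To conclude I write $T=\tlim_{t\ra 0}v_t$ with $v_t\in E_t$ (possible since $T\in E_0=\tlim_{t\ra 0}E_t$), apply the continuous linear map $\phi_a$ to get $\phi_a(T)=\tlim_{t\ra 0}\phi_a(v_t)$ with $\phi_a(v_t)\in\phi_a(E_t)$, and deduce that $[\phi_a(T)]$ lies in the closed set $\s_{r-1}(Seg(\BP(A/\langle a\rangle)\times\BP B\times\BP C))$ when $\phi_a(T)\neq 0$; if $\phi_a(T)=0$ there is nothing to prove. Either way $\ur(T')=\ur(\phi_a(T))\leq r-1$.

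The one genuine subtlety --- and the point I would be most careful about --- is the indispensability of the stationary point. A naive attempt would take $a=a_1(0)$ after rescaling $y_1(t)=a_1(t)\ot b_1(t)\ot c_1(t)$ so that $a_1(0)\neq 0$; but then $\phi_a(y_1(t))$ only tends to $0$ instead of vanishing identically, and a family of spans one of whose generators tends to zero need not drop dimension (for instance $\langle te_1,e_2\rangle=\langle e_1,e_2\rangle$ for all $t\neq 0$), so $\dim\phi_a(E_t)$ can stay equal to $r$ and the argument collapses. Freezing one rank one tensor along the whole family, via \cite[Lemma 2.1]{MR3239293}, is precisely what forces the honest inequality $\dim\phi_a(E_t)\leq r-1$. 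The remaining ingredients --- that a convergent sequence may replace a curve, that a linearly dependent tuple of rank one tensors can be pruned to a spanning subset, that images of rank one tensors under $\phi_a$ have rank at most one, and the degenerate case $\phi_a(T)=0$ --- are routine.
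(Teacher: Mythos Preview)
Your proof is correct, and it takes a genuinely different route from the paper's. The paper's argument applies the rank substitution lemma (Proposition~\ref{prop:slice}) pointwise along a curve $T_t\to T$ of rank~$\leq r$ tensors, obtaining for each $t$ a hyperplane $H_t$ with $\bold R(T_t|_{H_t})\leq r-1$; it then invokes an external reference for the fact that $H_t$ may be chosen to vary smoothly, and finally uses compactness of projective space to extract a limiting hyperplane $H_0$. Your argument instead fixes the hyperplane once and for all by invoking \cite[Lemma~2.1]{MR3239293} (already cited at the start of \S\ref{norformsect}) to arrange a stationary rank-one point $x_1=[a\ot b_1\ot c_1]$ in every $E_t$, and then projects the entire border rank algorithm by the single linear map $\phi_a$.

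What each approach buys: your route is more self-contained and sidesteps the need to track a moving hyperplane or appeal to smooth variation; the key inequality $\dim\phi_a(E_t)\leq r-1$ holds on the nose for all $t$, not merely in a limit. The trade-off is that you use the homogeneity of the Segre variety via \cite[Lemma~2.1]{MR3239293}, whereas the paper's compactness argument makes no such structural demand on $X$ (though for this lemma both arguments are confined to the Segre setting anyway). Your closing remark about why freezing the point is essential---and why the naive choice $a=a_1(0)$ fails---is exactly the right diagnosis.
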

\begin{proof}
First note this is true for rank (AFT). Let $T_t$ be a curve with $\tlim_{t\ra 0}T_t=T$.
For each $t$ the statement is true for some $H_t$, and by the proof in
\cite{LMabten}, one sees that  the $H_t$ will vary smoothly with $t$. Since projective space is compact there is
a limiting $H_0$   and the statement holds with $H_A=H_0$.
\end{proof}

One could in principle  apply this,  alternating the roles of $A,B,C$, to potentially obtain border rank bounds
up to $\tdim A+\tdim B+\tdim C-3$, but the need to test all possible hyperplanes makes such a use unlikely in practice. 

\begin{corollary}\label{mredchoice}  Let $\Mnred=\Mn- \sum_j x^1_\nnn\ot y^\nnn_j\ot z^j_1$ be a reduced matrix multiplication operator.
Then $\ur( \Mn)\geq \ur(\Mnred)+1$.
\end{corollary}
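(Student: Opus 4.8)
The plan is to apply Lemma \ref{borderaft} directly, but the subtlety is that an arbitrary hyperplane projection need not produce the specific reduced operator $\Mnred$ appearing in the statement. So the strategy is to first reduce to the situation where the projection is along a coordinate vector of our choosing, using the symmetry of $\Mn$ to move the hyperplane supplied by the lemma into a standard position.

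First I would recall the setup: write $A=U^*\ot V$ with the standard basis $x^i_j = \xi^i\ot e_j$ of matrix units, and similarly for $B=V^*\ot W$ and $C=W^*\ot U$. The operator $\Mnred$ is obtained from $\Mn$ by deleting the terms $\sum_j x^1_\nnn\ot y^\nnn_j\ot z^j_1$, which is exactly the image of $\Mn$ under the projection $A\ot B\ot C\to (A/x^1_\nnn)\ot B\ot C$ (one checks that the only terms of $\Mn=\sum_{i,j,k} x^i_j\ot y^j_k\ot z^k_i$ involving $x^1_\nnn$ are those with $i=1$, the superscript $=\nnn$, giving $\sum_k x^1_\nnn\ot y^\nnn_k\ot z^k_1$). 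Then I would invoke Lemma \ref{borderaft}: there is some hyperplane $H_A\subset A^*$, equivalently some $a\in A$, with $\ur(T')\leq\ur(\Mn)-1$ where $T'$ is the image of $\Mn$ under $A\ot B\ot C\to (A/a)\ot B\ot C$.

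The key step is then to show that $\ur(T')$ does not depend on which $a$ is chosen, i.e. that all such single-variable reductions of $\Mn$ have the same border rank, and in particular equal $\ur(\Mnred)$. For this I would use that $G_{\Mn}=GL(U)\times GL(V)\times GL(W)$ acts on $A\ot B\ot C$ fixing $\Mn$, and that this group acts on $A=U^*\ot V$ by $(g_U,g_V)\cdot\alpha = g_V\alpha g_U^{-1}$; since $GL(U)\times GL(V)$ acts transitively on $A\setminus\{0\}$ (any nonzero matrix can be moved to any other nonzero matrix, in particular to the rank-one matrix $x^1_\nnn$), there is $g\in G_{\Mn}$ with $g\cdot a$ a nonzero multiple of $x^1_\nnn$. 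Applying $g$ intertwines the projection along $a$ with the projection along $x^1_\nnn$ and carries $\Mn$ to $\Mn$, so $T'\cong\Mnred$ as tensors (up to the $GL(B)\times GL(C)$-change of basis induced by $g$, which preserves border rank). Hence $\ur(\Mnred)=\ur(T')\leq\ur(\Mn)-1$, which rearranges to $\ur(\Mn)\geq\ur(\Mnred)+1$.

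The main obstacle is making the last paragraph's intertwining precise: one must check that the linear isomorphism $A\ot B\ot C\to A\ot B\ot C$ induced by $g\in G_{\Mn}$ descends to an isomorphism $(A/a)\ot B\ot C\to (A/x^1_\nnn)\ot B\ot C$ compatible with the two projections, and that the induced map on $B\ot C$ is a genuine change of basis (hence border-rank preserving). This is routine equivariance bookkeeping — the projection $A\to A/a$ is $G_a$-equivariant and $g$ maps $A/a$ isomorphically to $A/(g\cdot a)=A/x^1_\nnn$ — but it is where all the content sits, since Lemma \ref{borderaft} by itself gives no control over which hyperplane appears. An alternative, should the equivariance argument feel delicate, is simply to note that $\Mnred$ is $G_{\Mn,x^1_\nnn}$-conjugate to every other single-slice reduction and invoke that border rank is constant on $G_X$-orbits, so it suffices that $\Mnred$ is \emph{a} valid output of Lemma \ref{borderaft}, which follows because the lemma's hyperplane can be pushed into the orbit of any prescribed one.
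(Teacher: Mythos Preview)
Your overall strategy---use the symmetry group $G_{\Mn}$ to move the unknown element $a$ supplied by Lemma~\ref{borderaft} to the specific $x^1_\nnn$---is exactly the right idea, and your bookkeeping on how the projection intertwines with the group action is fine. But the key claim that makes the argument run is false: $GL(U)\times GL(V)$ does \emph{not} act transitively on $A\setminus\{0\}=(U^*\ot V)\setminus\{0\}$. The action is $(g_U,g_V)\cdot\alpha=g_V\,\alpha\,g_U^{-1}$ on matrices, which preserves rank, so there are $\nnn$ distinct nonzero orbits (one for each rank $1,\dots,\nnn$). If the element $a$ produced by Lemma~\ref{borderaft} happens to have rank $\geq 2$, you cannot send it to the rank-one matrix $x^1_\nnn$ by any element of $G_{\Mn}$, and your intertwining step never gets off the ground.

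The paper repairs this with an orbit-closure argument. Whatever rank $a$ has, the closure $\ol{G_{\Mn}\cdot a}$ contains all matrices of smaller rank, in particular the rank-one orbit. So choose $g_t\in G_{\Mn}$ with $g_t\cdot a\to x^1_\nnn$. For each $t$ your intertwining argument applies verbatim and shows that the projection of $\Mn$ to $(A/g_t\cdot a)\ot B\ot C$ has border rank $\leq\ur(\Mn)-1$. Now identify all nearby quotients $A/g_t\cdot a$ with a fixed complement to $x^1_\nnn$; the projected tensors then converge to $\Mnred$, and since $\{\ur\leq r\}$ is Zariski-closed, the limit $\Mnred$ also satisfies $\ur(\Mnred)\leq\ur(\Mn)-1$. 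That limiting step---passing from the orbit to its closure via lower semi-continuity of border rank---is precisely the missing ingredient in your argument.
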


\begin{proof} There are $\nnn$ non-zero orbits in $A$ under $G_{\Mn}$. Say the element 
$x$ of $A$ that works in Lemma
\ref{borderaft} has rank $r$. If we 
act on it by an element of $G_{\Mn}$, the new element still works, so it will work for
points in the orbit closure   $\ol{G_{\Mn}\cdot x}$, which contain the rank one elements. 
  Finally all rank one elements are equivalent to $x^1_{\nnn}$.
\end{proof}

\section{A new lower bound for the border rank of matrix multiplication}

\begin{theorem}\label{mredbnd}
When $\nnn\geq 3$, $\ur(\Mnred)\geq 2\nnn^2-\nnn $.
\end{theorem}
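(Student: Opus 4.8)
The plan is to combine the border-substitution method (Lemma~\ref{borderaft}) with the normal form lemma and the analysis of $\cK$ (Proposition~\ref{ckprop}) to argue by induction on $\nnn$. The reduced operator $\Mnred = \Mn - \sum_j x^1_\nnn\ot y^\nnn_j\ot z^j_1$ lives in $A'\ot B\ot C$ where $A' = A/\langle x^\nnn_1\rangle$ has dimension $\nnn^2-1$; concretely, in $A'$ the matrix entry $x^\nnn_1$ has been killed, so $A'$ consists of $\nnn\times\nnn$ matrices with the $(\nnn,1)$-entry suppressed. First I would fix a border rank algorithm $E_t$ realizing $\Mnred$ as a point of $\s_r(Seg)$ with $r = \ur(\Mnred)$, and apply Lemma~\ref{borderaft} to find $a\in A'$ such that projecting $A'$ modulo $a$ kills one more dimension of border rank. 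The point is then to understand, via the normal form lemma, how restricting $\Mnred$ to a hyperplane in each of the three factors behaves: each such restriction is (up to the correction term) a matrix multiplication operator of the form $M_{\langle U,V,W\rangle}$ with one of $U,V,W$ replaced by a codimension-one subspace, i.e.\ essentially $M_{\langle \nnn-1,\nnn,\nnn\rangle}$ or a reduced version thereof.

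The key steps, in order: (1) Set up the induction with base case $\nnn = 3$, where one verifies $\ur(M_{\langle 3\rangle}^{red})\geq 16$ directly (this is the "$16\leq \ur(\Mthree)$" claim from the opening remark, and the computation is essentially the two-extreme-cases analysis sketched in \S\ref{areolesect} pushed through all cases). (2) For the inductive step, suppose $\ur(M_{\langle \nnn-1\rangle}^{red})\geq 2(\nnn-1)^2-(\nnn-1)$. Apply Lemma~\ref{borderaft} repeatedly — alternating the three factors — to $\Mnred$: each application drops border rank by one and restricts one factor to a hyperplane. (3) The crucial point is to choose the hyperplanes so that after the restrictions one lands on (a copy of) $M_{\langle \nnn-1\rangle}^{red}$ or something whose border rank we already control, using that by Corollary~\ref{mredchoice}-type reasoning the orbit-closure of the working hyperplane contains a distinguished (rank-one, hence $\cK$-adjacent) element, so one may assume the hyperplane is in "normal form". (4) Count: if we can perform enough substitutions to reach $M_{\langle \nnn-1\rangle}^{red}$ while losing only the "expected" number of border rank units, the arithmetic $2(\nnn-1)^2-(\nnn-1) + (\text{units lost}) \leq \ur(\Mnred)$ should yield $\ur(\Mnred)\geq 2\nnn^2-\nnn$. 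A cleaner alternative, which I would try first, is to avoid full induction: directly show that any border rank algorithm for $\Mnred$ in normal form (with a stationary point in $\cK$, all other points limiting to it, so that $\Mnred$ lies in a greater areole $\tilde{\mathfrak a}_r(Seg,k)$) forces $r\geq 2\nnn^2-\nnn$, by the ideal-theoretic argument in the style of the proposition on the bud and matrix multiplication: writing a general areole point as $\sum a_i\ot b_j\ot c_k$ and using that the left- and right-ideal structure of $\nnn\times\nnn$ matrices forces the spans $\langle a_1,\dots\rangle$, $\langle b_1,\dots\rangle$, $\langle c_1,\dots\rangle$ to be large, while the reduction has only shaved off the tensors involving $x^\nnn_1$.

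The main obstacle I expect is step (3)/(4): controlling exactly how much border rank is lost under the iterated restrictions and ensuring the restricted tensor is genuinely (equivalent to) the smaller reduced matrix multiplication operator rather than some degenerate cousin with smaller border rank than expected. In the rank setting the substitution method has the clean lower-bound half of Proposition~\ref{prop:slice} (when $\trank(M_1)=1$ the rank drops by \emph{exactly} one), but Lemma~\ref{borderaft} as stated only gives the upper-bound direction $\ur(T') \le \ur(T)-1$; there is no a~priori guarantee the drop is exactly one, so naively iterating could overshoot. Thus the real work is to pair each border-substitution with a matching lower bound for the restricted operator — either by the induction hypothesis applied to the correct smaller reduced operator, or by the flattening/Strassen-type equations that give the known $2\nnn^2 - O(\nnn)$ bound — and to check that the hyperplanes produced by Lemma~\ref{borderaft}, after moving into normal form, do not accidentally collapse the structure. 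Getting the constant right (the "$-\nnn$" versus "$-\nnn+1$" or worse) is where all the care goes; the rest is the ideal-span bookkeeping for $\nnn\times\nnn$ matrix algebras already illustrated in the two-extreme-cases proposition.
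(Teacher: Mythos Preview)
Your proposal is not a proof; it is a plan with a gap you yourself flag, and that gap is fatal for both routes you suggest. The substitution route needs, at each step, a lower bound $\ur(T')\geq \ur(T)-1$, but Lemma~\ref{borderaft} only gives the upper bound $\ur(T')\leq \ur(T)-1$; there is no border-rank analogue of the ``rank drops by exactly one when $\operatorname{rank}(M_1)=1$'' half of Proposition~\ref{prop:slice}, and without it iterating gives nothing. Your alternative, the ideal-theoretic areole argument, is modeled on the proposition about $\fb^0_{2\nnn^2-2}$, but that proposition concerns only the \emph{open bud}, not the greater areole where $\Mnred$ is known to live; the paper explicitly says bridging that gap ``seems very difficult.'' Also, your base case is off: for $\nnn=3$ the claimed bound is $2\cdot 9-3=15$, not $16$ (the $16$ you quote is $\ur(\Mthree)$, obtained by adding one via Corollary~\ref{mredchoice}).

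The paper's proof is entirely different and uses none of the substitution/normal-form/areole machinery. It is a direct linear-algebra lower bound via Koszul flattenings: one restricts $\tilde A=\BC^{\nnn^2-1}$ to a carefully chosen $A'=\BC^{2\nnn-1}$ (via $x^i_j\mapsto e_{i+j-1}$), forms the map
\[
(\Monred)^{\wedge(\nnn-1)}_{A'}:\La{\nnn-1}A'\ot V\longrightarrow \La{\nnn}A'\ot U,
\]
and proves its rank is exactly $\nnn\binom{2\nnn-1}{\nnn}-1$ by an explicit combinatorial induction over a partial order on basis vectors $(P,l)$ of the target. Since a rank-one tensor contributes rank $\binom{2\nnn-2}{\nnn-1}$ to this flattening, dividing gives $\ur(\Mnred)\geq \bigl(\nnn\binom{2\nnn-1}{\nnn}-1\bigr)/\binom{2\nnn-2}{\nnn-1}$, which is at least $2\nnn-1$ times $\nnn$ (after the factor of $\nnn$ from $\Id_W$) for $\nnn\geq 3$. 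The border-substitution lemma enters only afterward, in Corollary~\ref{mredchoice}, to pass from $\Mnred$ back up to $\Mn$.
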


When $\nnn=2$, it was shown in \cite{DBLP:journals/corr/LandsbergR15a} that $\ur(M_{\langle 2\rangle}^{red})=5$.

Theorem \ref{mredbnd} combined with Corollary \ref{mredchoice} implies:

\begin{theorem}\label{bndthm} Let $\nnn\geq 2$, then  $\ur(\Mn)\geq 2\nnn^2-\nnn+1$.
\end{theorem}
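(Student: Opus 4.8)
The proof of Theorem \ref{bndthm} is immediate from the two results it cites, so the plan is simply to assemble them; the real content is Theorem \ref{mredbnd}. For $\nnn=2$ the bound $\ur(M_{\langle 2\rangle})\geq 7$ would follow from $\ur(M_{\langle 2\rangle}^{red})=5$ of \cite{DBLP:journals/corr/LandsbergR15a} together with Corollary \ref{mredchoice}, but in fact $\ur(M_{\langle 2\rangle})=7$ is classical (Strassen/Landsberg), so one can either invoke that directly or note $5+1+1=7$ is not what we want — rather, the $\nnn=2$ case of the theorem asserts $\ur(M_{\langle 2\rangle})\geq 5$, which is trivial; the interesting range is $\nnn\geq 3$. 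Thus I would state: for $\nnn\geq 3$, combine Theorem \ref{mredbnd}, giving $\ur(\Mnred)\geq 2\nnn^2-\nnn$, with Corollary \ref{mredchoice}, giving $\ur(\Mn)\geq \ur(\Mnred)+1\geq 2\nnn^2-\nnn+1$; for $\nnn=2$ the bound $\ur(M_{\langle 2\rangle})\geq 2\cdot 4 - 2 + 1 = 7$ is the known value, already established in the cited literature.

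The substance therefore lies in proving Theorem \ref{mredbnd}, $\ur(\Mnred)\geq 2\nnn^2-\nnn$. The plan here is to run the border-substitution method of Lemma \ref{borderaft} in combination with the normal form of Lemma \ref{normalformlem} and Proposition \ref{ckprop}. First I would apply Lemma \ref{normalformlem} to $v=M_{\langle U,V,W\rangle}$ with $X=Seg(\BP A\times\BP B\times\BP C)$: by Proposition \ref{ckprop}, $G_{M_{\langle U,V,W\rangle}}$ has the unique closed orbit $\cK$, and every orbit of $G_{M_{\langle U,V,W\rangle},k}$ contains $k$ in its closure, so a hypothetical border rank $r$ algorithm for $\Mnred$ can be put in normal form with one stationary point $x_1\in\cK$ and all other points limiting to $x_1$. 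One then studies the limiting $r$-plane $E_0$, which now must lie in the greater areole $\tilde{\mathfrak a}_r(X,x_1)$, and exploit the algebra structure of $A=B=C=\Mat_{\nnn\times\nnn}$: the key tool is that $\Mnred$ restricted to hyperplanes, via Lemma \ref{borderaft}, peels off border rank one at a time while the ideal-theoretic constraints (left and right ideals of the matrix algebra have dimension at least $\nnn$) force the spanning vectors to be highly degenerate, as already illustrated in the two extreme cases of the Proposition in Section \ref{areolesect}.

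The main obstacle, and the part requiring genuine work, is the case analysis: one must show that no border rank $2\nnn^2-\nnn-1$ algorithm in normal form can realize $\Mnred$. This parallels — but strengthens by one — the argument of \cite{MR3376667} giving $\ur(\Mn)\geq 2\nnn^2-\nnn$, and the improvement is squeezed out precisely by using the reduced tensor $\Mnred$ (which has one fewer "degree of freedom" in a suitable sense) rather than $\Mn$, so that the substitution/normal-form bookkeeping closes off one extra possibility. Concretely I expect the proof to split according to the dimensions of the spans $\langle a_1\hd a_{\nnn^2}\rangle$, $\langle b_i\rangle$, $\langle c_k\rangle$ of the vectors appearing in the normal form, using cyclic symmetry to reduce the number of cases, and in each case deriving a contradiction from the structure of one-sided ideals in $\Mat_{\nnn\times\nnn}$ together with the fact that $\Mnred$ kills the slice $x^1_\nnn\ot y^\nnn_j\ot z^j_1$. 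I would carry out the two extreme cases explicitly (as is done for the bud above) and indicate that the intermediate cases are handled by the same mechanism; making every intermediate case airtight is the true labor of the argument and the step most likely to contain hidden subtleties.
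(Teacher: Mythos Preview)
Your first paragraph is essentially right and matches the paper: Theorem~\ref{bndthm} is obtained by combining Theorem~\ref{mredbnd} with Corollary~\ref{mredchoice} for $\nnn\geq 3$, and the case $\nnn=2$ is handled by citing the known equality $\ur(M_{\langle 2\rangle})=7$. (Your intermediate sentence ``the $\nnn=2$ case of the theorem asserts $\ur(M_{\langle 2\rangle})\geq 5$'' is a slip --- $2\cdot 4-2+1=7$ --- but you correct yourself immediately after.)

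Where you go wrong is in your sketch of Theorem~\ref{mredbnd}. The paper does \emph{not} prove it via the normal form lemma, Proposition~\ref{ckprop}, or any case analysis on spans and one-sided ideals. That machinery is used elsewhere (and in the commented-out bud analysis for $M_{\langle 3\rangle}$), but Theorem~\ref{mredbnd} is proved by a direct rank computation for a \emph{Koszul flattening}. One takes the map
\[
(\Monred)^{\wedge\, \nnn-1}_{A'}:\ \La{\nnn-1}A'\otimes V\ \longrightarrow\ \La{\nnn}A'\otimes U,
\]
with $A'=\BC^{2\nnn-1}$ obtained from $\tilde A$ via $x^i_j\mapsto e_{i+j-1}$, and shows its rank equals $\nnn\binom{2\nnn-1}{\nnn}-1$. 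Since a rank-one tensor contributes at most $\binom{2\nnn-2}{\nnn-1}$ to this flattening, dividing gives a lower bound on $\ur(\Monred)$ strictly greater than $2\nnn-1$ for $\nnn\geq 3$; tensoring with $\Id_W$ passes this to $\Mnred$. The inductive rank computation is the real content, carried out via a carefully chosen partial order on basis elements $(P,l)$ of the target.

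Your proposed route also has a structural issue: Lemma~\ref{normalformlem} is stated for tensors with a large symmetry group, and $\Mnred$ no longer enjoys the full $GL(U)\times GL(V)\times GL(W)$ symmetry of $\Mn$, so you cannot invoke Proposition~\ref{ckprop} for it as written. The border-substitution lemma and the normal-form lemma play their role one step earlier (in Corollary~\ref{mredchoice}), not inside the proof of Theorem~\ref{mredbnd}.
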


\begin{remark} The result was already shown for $\nnn=2$, where it is optimal,  in  \cite{MR2188132,MR3171099}.
\end{remark}

\begin{remark} The state of the art in other small cases is  
$16\leq \ur(\Mthree)\leq 20$ (the upper bound appears in \cite{MR3146566}), and $29\leq \ur(M_{\langle 4\rangle})\leq 49$ (with
the upper bound due to Strassen).
\end{remark}

\begin{proof}[Proof of Theorem \ref{mredbnd}]
We use the Koszul flattenings defined in \cite{MR3376667} which were used to prove $\ur(\Mn)\geq 2\nnn^2-\nnn$.
Write $ \Mnred\in   \BC^{\nnn^2-1}\ot B \ot C= \tilde A\ot   B \ot C$.
Consider for any $T\in \tilde A\ot B \ot C$, the Koszul flattening
$T^{\ww{\nnn-1}}_{\tilde A}: \La{\nnn-1}\tilde A\ot  B^*\ra \La\nnn \tilde A\ot C$  given by: $$T=\sum t^{ijk}a_i\ot b_j\ot c_k
f_1\ww\cdots \ww f_{n-1}\ot \b \mapsto \sum \b(b_j) t^{ijk}a_i\ww f_1\ww\cdots \ww f_{n-1}\ot c_k.$$
For matrix multiplication and for the reduced matrix multiplication this map
factors, $B=V^*\ot W$ and $C=W^*\ot U$, where $U,V,W=\BC^{\nnn}$ to 
$(M_{\langle 1,1,\nnn\rangle})^{\ww \nnn -1}_A\ot \Id_W$   and $(\Monred)^{\ww \nnn -1}_{\tilde A}\ot \Id_W$ respectively,
where $(\Monred)^{\ww \nnn -1 }_{\tilde A}: \La{\nnn-1}\tilde A\ot   V \ra \La\nnn \tilde A\ot U$. 
As in \cite{MR3376667}, we obtain the best result by restricting to an $A':=\BC^{2n-1}\subset \tilde A$.
Define the map 
\begin{align*}
\phi: \tilde A &\ra A'\\
x^i_j&\mapsto e_{i+j-1}.
\end{align*}

Since $(a\ot b\ot c)_{\BC^{2n-1}}^{\ww (\nnn-1)}$ has rank $\binom{2\nnn-2}{\nnn-1}$ (its image is $a\ww \La{\nnn-1}(\BC^{2n-1}/a)\ot c$),
it will suffice to prove, for $\nnn\geq 3$, that
$$
\frac{\trank (\Monred)^{\ww \nnn -1 }_{  A'}}{\binom{2\nnn-2}{\nnn-1}} \geq 2\nnn-1
$$
We claim that
$\trank (\Monred)^{\ww \nnn -1 }_{  A'}=\nnn\binom{2\nnn-1}\nnn -1$, which will prove the result when $\nnn\geq 3$.

To prove the claim we establish some notation.
Write $e_S=e_{s_1}\ww\cdots \ww e_{s_{\nnn-1}}$, where $S\subset [2\nnn-1]$ has cardinality $\nnn-1$.
Our map is
$$
e_S\ot v_k\mapsto \sum_{\{ m\mid ( m,k)\neq (\nnn,1)\} }  \phi(u^m\ot v_k)\ww e_S\ot u_m = \sum_{\{ m\mid ( m,k)\neq (\nnn,1)\} } e_{m+k-1}\ww e_S\ot u_m
$$
Index a basis of the source by pairs $(S,k)$, with $k\in [\nnn]$, and the target by $(P,l)$ where
$P\subset [2\nnn-1]$ has cardinality $\nnn $ and $l\in[\nnn]$.
We define an order relation on the target basis vectors in the following way. For $(P_1,l_1)$ and $(P_2,l_2)$, set $l=\tmin\{ l_1,l_2\}$, and declare
$(P_1,l_1)<(P_2,l_2)$ if and only if
\begin{enumerate}
\item In lexicographic order,  the set of $l$ minimal elements of $P_1$ is strictly after  the set of $l$ minimal elements of $P_2$ (i.e.~the smallest element of $P_2$ is smaller than the smallest of $P_1$ or they are equal and the second smallest of $P_2$ is smaller or equal etc.~up to $l$-th),  or
\item the $l$ minimal elements in $P_1$ and $P_2$ are the same, and $l_1<l_2$. 
\end{enumerate}
Note that
$(\{\nnn\hd 2\nnn-1\},1)$ is the unique minimal element for this relation and $([\nnn],\nnn)$ is the unique maximal element.
Note further that
$$
e_{n+1}\ww\cdots \ww e_{2\nnn-1}\ot u_n\mapsto
e_{n}\ww\cdots \ww e_{2\nnn-1}\ot v_1
$$
i.e., that 
$$
(\{\nnn+1\hd 2\nnn-1\},\nnn)\mapsto (\{\nnn\hd 2\nnn-1\},1)
$$
We will prove the claim by showing that the image is the span of all basis elements $(P,l)$ except the maximal element $([\nnn],\nnn)$.
We work by induction using the relation, the base case 
that  $(\{\nnn\hd 2\nnn-1\},1)$ is  in the image has been established.
Let $(P,l)$ be any basis element other than the maximal, and assume all
$(P',l')$ with  $(P',l')<(P,l)$ have been shown to be in the image.
Write $P=(p_1\hd p_{\nnn})$ with $p_i<p_{i+1}$. 
Consider the image of $(P\backslash \{ p_l\}, 1+p_l-l)$
which is 
$$\sum_{\{ m\mid ( m, { 1+p_l-l}) \neq ( \nnn, 1)\} }  \phi(u^m\ot v_{ 1+p_l-l})\ww e_{P\backslash \{ p_l\} } \ot u_m=
\sum_{\{ m\mid ( m, { 1+p_l-l}) \neq ( \nnn, 1)\} }   e_{ p_l-l+m}\ww e_{P\backslash \{ p_l\}} \ot u_m.
$$
In particular, taking $m=l$ we see $(P,l)$ is among the summands, as long as  $(P,l)$ is not the maximal element.
If $m<l$, the contribution to the summand is a $(P',m)$ where the first $m$ terms of $P'$ equal the
first of $P$, so by     condition (2),  $(P',m)<(P,l)$. If $m>l$, the summand is a $(P'',m)$ where
the first $l-1$ terms of $P$ and $P''$ agree, and the $l$-th terms are respectively
$p_l$ and $p_l-l+m$ so by condition (1) $(P'',m)<(P,l)$.
\end{proof}

\bibliographystyle{amsplain}
 
\bibliography{Lmatrix}

\def\cdprime{$''$} \def\cprime{$'$} \def\cprime{$'$} \def\cprime{$'$}
  \def\Dbar{\leavevmode\lower.6ex\hbox to 0pt{\hskip-.23ex \accent"16\hss}D}
  \def\cprime{$'$} \def\cprime{$'$} \def\cdprime{$''$} \def\cprime{$'$}
  \def\cprime{$'$} \def\Dbar{\leavevmode\lower.6ex\hbox to 0pt{\hskip-.23ex
  \accent"16\hss}D} \def\cprime{$'$} \def\cprime{$'$} \def\cprime{$'$}
  \def\cprime{$'$} \def\Dbar{\leavevmode\lower.6ex\hbox to 0pt{\hskip-.23ex
  \accent"16\hss}D} \def\cprime{$'$} \def\cprime{$'$}
\providecommand{\bysame}{\leavevmode\hbox to3em{\hrulefill}\thinspace}
\providecommand{\MR}{\relax\ifhmode\unskip\space\fi MR }
\providecommand{\MRhref}[2]{%
  \href{http://www.ams.org/mathscinet-getitem?mr=#1}{#2}
}
\providecommand{\href}[2]{#2}
\begin{thebibliography}{10}

\bibitem{alexeev+forbes+tsimerman:2011:tensor-rank}
Boris Alexeev, Michael Forbes, and Jacob Tsimerman, \emph{Tensor rank: some
  lower and upper bounds}, IEEE Conference on Computational Complexity, IEEE
  Computer Society, Feb 2011, pp.~283--291.

\bibitem{BCR}
Cristina Bertone, Francesca Cioffi, and Margherita Roggero, \emph{A division
  algorithm in an affine framework for flat families covering hilbert schemes},
  arXiv preprint arXiv:1211.7264 (2012).

\bibitem{MR605920}
D.~Bini, \emph{Relations between exact and approximate bilinear algorithms.
  {A}pplications}, Calcolo \textbf{17} (1980), no.~1, 87--97. \MR{605920
  (83f:68043b)}

\bibitem{Bl2}
Markus Bl{\"a}ser, \emph{On the complexity of the multiplication of matrices of
  small formats}, J. Complexity \textbf{19} (2003), no.~1, 43--60.
  \MR{MR1951322 (2003k:68040)}

\bibitem{blaser2014explicit}
\bysame, \emph{Explicit tensors}, Perspectives in Computational Complexity,
  Springer, 2014, pp.~117--130.

\bibitem{briancon}
Jo{\"e}l Brian{\c{c}}on, \emph{Description de hilb n c $\{$x, y$\}$},
  Inventiones mathematicae \textbf{41} (1977), no.~1, 45--89.

\bibitem{2015arXiv151105707B}
J.~{Buczy{\'n}ski}, T.~{Januszkiewicz}, J.~{Jelisiejew}, and M.~{Michalek},
  \emph{{Constructions of k-regular maps using finite local schemes}}, ArXiv
  e-prints (2015).

\bibitem{MR3092255}
Jaroslaw Buczy{\'n}ski, Adam Ginensky, and J.~M. Landsberg, \emph{Determinantal
  equations for secant varieties and the {E}isenbud-{K}oh-{S}tillman
  conjecture}, J. Lond. Math. Soc. (2) \textbf{88} (2013), no.~1, 1--24.
  \MR{3092255}

\bibitem{MR3239293}
Jaroslaw Buczy{\'n}ski and J.~M. Landsberg, \emph{On the third secant variety},
  J. Algebraic Combin. \textbf{40} (2014), no.~2, 475--502. \MR{3239293}

\bibitem{2015arXiv151204905C}
E.~{Carlini}, C.~{Guo}, and E.~{Ventura}, \emph{{Real and complex Waring rank
  of reducible cubic forms}}, ArXiv e-prints (2015).

\bibitem{Como02:oxford}
P.~Comon, \emph{Tensor decompositions, state of the art and applications},
  Mathematics in Signal Processing {V} (J.~G. McWhirter and I.~K. Proudler,
  eds.), Clarendon Press, Oxford, UK, 2002, arXiv:0905.0454v1, pp.~1--24.

\bibitem{FGAIV}
Alexander Grothendieck, \emph{Techniques de construction et th{\'e}or{\`e}mes
  d'existence en g{\'e}om{\'e}trie alg{\'e}brique iv: Les sch{\'e}mas de
  hilbert}, S{\'e}minaire Bourbaki \textbf{6} (1960), 249--276.

\bibitem{MR0463157}
Robin Hartshorne, \emph{Algebraic geometry}, Springer-Verlag, New York, 1977,
  Graduate Texts in Mathematics, No. 52. \MR{MR0463157 (57 \#3116)}

\bibitem{MR3171099}
Jonathan~D. Hauenstein, Christian Ikenmeyer, and J.~M. Landsberg,
  \emph{Equations for lower bounds on border rank}, Exp. Math. \textbf{22}
  (2013), no.~4, 372--383. \MR{3171099}

\bibitem{MR1735271}
Anthony Iarrobino and Vassil Kanev, \emph{Power sums, {G}orenstein algebras,
  and determinantal loci}, Lecture Notes in Mathematics, vol. 1721,
  Springer-Verlag, Berlin, 1999, Appendix C by Iarrobino and Steven L. Kleiman.
  \MR{MR1735271 (2001d:14056)}

\bibitem{JJ}
Joachim Jelisiejew, \emph{Local finite-dimensional gorenstein k-algebras having
  hilbert function (1, 5, 5, 1) are smoothable}, Journal of Algebra and Its
  Applications \textbf{13} (2014), no.~08, 1450056.

\bibitem{Laderman}
Julian~D. Laderman, \emph{A noncommutative algorithm for multiplying {$3\times
  3$} matrices using {$23$} muliplications}, Bull. Amer. Math. Soc. \textbf{82}
  (1976), no.~1, 126--128. \MR{MR0395320 (52 \#16117)}

\bibitem{MR2188132}
J.~M. Landsberg, \emph{The border rank of the multiplication of {$2\times2$}
  matrices is seven}, J. Amer. Math. Soc. \textbf{19} (2006), no.~2, 447--459.
  \MR{2188132 (2006j:68034)}

\bibitem{MR2865915}
\bysame, \emph{Tensors: geometry and applications}, Graduate Studies in
  Mathematics, vol. 128, American Mathematical Society, Providence, RI, 2012.
  \MR{2865915}

\bibitem{MR3162411}
\bysame, \emph{New lower bounds for the rank of matrix multiplication}, SIAM J.
  Comput. \textbf{43} (2014), no.~1, 144--149. \MR{3162411}

\bibitem{LMabten}
J.~M. {Landsberg} and M.~{Michalek}, \emph{{Abelian Tensors}}, ArXiv e-prints
  (2015).

\bibitem{2015arXiv150403732L}
\bysame, \emph{{Abelian Tensors}}, ArXiv e-prints (2015).

\bibitem{DBLP:journals/corr/LandsbergR15a}
J.~M. Landsberg and Nicholas Ryder, \emph{On the geometry of border rank
  algorithms for n x 2 by 2 x 2 matrix multiplication}, CoRR
  \textbf{abs/1509.08323} (2015).

\bibitem{MR2628829}
J.~M. Landsberg and Zach Teitler, \emph{On the ranks and border ranks of
  symmetric tensors}, Found. Comput. Math. \textbf{10} (2010), no.~3, 339--366.
  \MR{2628829 (2011d:14095)}

\bibitem{MR1966752}
Joseph~M. Landsberg and Laurent Manivel, \emph{On the projective geometry of
  rational homogeneous varieties}, Comment. Math. Helv. \textbf{78} (2003),
  no.~1, 65--100. \MR{1966752 (2004a:14050)}

\bibitem{MR3376667}
Joseph~M. Landsberg and Giorgio Ottaviani, \emph{New lower bounds for the
  border rank of matrix multiplication}, Theory Comput. \textbf{11} (2015),
  285--298. \MR{3376667}

\bibitem{2015arXiv151200609M}
M.~{Michalek} and C.~{Miller}, \emph{{Examples of $k$-regular maps and
  interpolation spaces}}, ArXiv e-prints (2015).

\bibitem{MR2842085}
Kristian Ranestad and Frank-Olaf Schreyer, \emph{On the rank of a symmetric
  form}, J. Algebra \textbf{346} (2011), 340--342. \MR{2842085 (2012j:13037)}

\bibitem{MR3146566}
A.~V. Smirnov, \emph{The bilinear complexity and practical algorithms for
  matrix multiplication}, Comput. Math. Math. Phys. \textbf{53} (2013), no.~12,
  1781--1795. \MR{3146566}

\bibitem{Strassen493}
Volker Strassen, \emph{Gaussian elimination is not optimal}, Numer. Math.
  \textbf{13} (1969), 354--356. \MR{40 \#2223}

\bibitem{zak}
F.~L. Zak, \emph{Tangents and secants of algebraic varieties}, Translations of
  Mathematical Monographs, vol. 127, American Mathematical Society, Providence,
  RI, 1993, Translated from the Russian manuscript by the author.
  \MR{94i:14053}

\end{thebibliography}

\end{document}